\renewcommand{\thefootnote}{\arabic {footnote}}
\newcommand\CC{\mathbb{C}}
\newcommand\HH{\mathbb{H}}
\renewcommand\Re{\operatorname{Re}}
\renewcommand\Im{\operatorname{Im}}
\newtheorem{theorem}{Theorem}
\newtheorem{lemma}[theorem]{Lemma}
\newtheorem{proposition}[theorem]{Proposition}
\newtheorem{corollary}[theorem]{Corollary}
\theoremstyle{remark}
\newcommand\blfootnote[1]{%
  \begingroup
  \renewcommand\thefootnote{}\footnote{#1}%
  \addtocounter{footnote}{-1}%
  \endgroup
}
\begin{document}
\title{\textbf{
The Sobolev Inequalities on Real Hyperbolic Spaces and  Eigenvalue Bounds for Schr\"odinger Operators with Complex Potentials}}
\author{Xi Chen }

\begin{abstract}In this paper, we prove the uniform estimates for the resolvent $(\Delta - \alpha)^{-1}$ as a map from $L^q$ to $L^{q'}$ on real hyperbolic space $\mathbb{H}^n$ where $\alpha \in \mathbb{C}\setminus [(n - 1)^2/4, \infty)$ and $2n/(n + 2) \leq q < 2$. In contrast with analogous results on Euclidean space $\mathbb{R}^n$, the exponent $q$ here can be arbitrarily close to $2$. This striking improvement is due to two non-Euclidean features of hyperbolic space: the Kunze-Stein phenomenon and the exponential decay of the spectral measure. In addition, we apply this result to the study of eigenvalue bounds of the Schr\"{o}dinger operator with a complex potential. The improved Sobolev inequality results in a better long range eigenvalue bound on $\mathbb{H}^n$ than that on $\mathbb{R}^n$.\end{abstract}

\maketitle
\section{Introduction}

\blfootnote{\textup{2010} \textit{Mathematics Subject Classification}: \textup{58C40},  \textup{35J10},  \textup{35P15}}

Let $\mathbb{H}^{n}$ be the $n$-dimensional real hyperbolic space for $n > 2$ and $\Delta$ the Laplacian on $\mathbb{H}^{n}$. We are concerned with the $(L^s, L^r)$ type estimates for the resolvent $(\Delta - \alpha)^{-1}$ with $\alpha \in \mathbb{C}$, i.e. the norm estimates for $$\|(\Delta - \alpha)^{-1}\|_{L^s(\mathbb{H}^{n}) \rightarrow L^r(\mathbb{H}^{n})}.$$

This type of resolvent estimates traces back to the classical Sobolev inequality on Euclidean space $\mathbb{R}^n$,
$$\|\Delta_{\mathbb{R}^n}^{-1}\|_{L^{2n/(n+2)}(\mathbb{R}^n)\to L^{2n/(n-2)}(\mathbb{R}^n)} < C.$$
One can regard this inequality as the $L^{2n/(n+2)}-L^{2n/(n-2)}$ boundedness of the resolvent at $\alpha=0$. More generally, Kenig-Ruiz-Sogge \cite{Kenig-Ruiz-Sogge} extended this to the non-zero energies $\alpha\neq 0$ and proved that for $\alpha\in \CC\setminus [0,\infty)$ and $ 1 < s, r < \infty$,
$$ \quad \mbox{with} \quad \frac{1}{s} = \frac{1}{r} + \frac{2}{n},\quad \min\bigg\{|\frac{1}{r} - \frac{1}{2}|, |\frac{1}{s} - \frac{1}{2}|\bigg\} > \frac{1}{2n},$$ the following uniform Sobolev inequality holds,
\begin{eqnarray}\label{eqn:Kenig-Ruiz-Sogge}\|(\Delta_{\mathbb{R}^n} - \alpha)^{-1}\|_{L^s(\mathbb{R}^n) \rightarrow L^r(\mathbb{R}^n)} < C.\end{eqnarray}

The inequalities of $(L^q, L^{q'})$ type are of particular interests, where $1/q + 1/q' = 1$. As is well-known, they are closely tied to a number of applications in PDEs. Let us mention the eigenvalue bounds for Schr\"odinger operators with complex potentials, the endpoint Strichartz estimates for Schr\"{o}dinger equations, the Carleman inequalities to deduce relevant unique continuation theorem.

 Guillarmou-Hassell \cite{Guillarmou-Hassell} proved that, \begin{equation*}\|(\Delta_M - \alpha)^{-1}\|_{L^q(M)\rightarrow L^{q'}(M)} \leq C |\alpha|^{n(1/q - 1/2) - 1},\end{equation*} for $2n/(n + 2) \leq q \leq 2(n + 1)/(n + 3)$ on $M$, a non-trapping $n$-dimensional asymptotically Euclidean space. We remark that the range of the exponent $q$ here is  linked with the range of the exponent in the Stein-Tomas restriction estimates or essentially the following spectral measure estimates, studied in \cite{Guillarmou-Hassell-Sikora}, \begin{equation}\label{eqn : spectral measure on Euclidean}\Big|  \bigg( \big( \frac{d}{d\lambda} \big)^j dE_{\sqrt{\Delta_M}}(\lambda) \bigg) (z,z')  \Big| \leq
C \lambda^{n-j} (1 + \rho \lambda)^{-n/2+j},\end{equation} where $\rho$ is the geodesic distance function on $M$ and the spectral measure $dE_{\sqrt{\Delta_M}}(\lambda)$ is defined through spectral theorem by \[f(\sqrt{\Delta_M}) = \int_0^\infty f(\lambda) dE_{\sqrt{\Delta_M}}(\lambda)\, d\lambda,\] for all bounded functions $f$.
 It is also worth pointing out that Knapp's counterexample shows that $2(n + 1)/(n + 3)$ is the upper bound of the exponents for Stein-Tomas estimates on asymptotically Euclidean spaces. Therefore, one can not obtain uniform Sobolev estimates on $\mathbb{R}^n$ for $p$ close to $2$.

In this paper, we shall study the uniform Sobolev inequalities of $(L^q, L^{q'})$ type on real hyperbolic spaces. More precisely, we prove \begin{theorem}\label{thm : Sobolev on hyperbolic}Suppose $L = \Delta - (n - 1)^2/4$ and $\alpha \in \mathbb{C} \setminus [0, \infty)$.

On the one hand, for high energies $|\alpha| > 1$,  we have that  \begin{equation}\label{eqn : q-q' sobolev large spectral parameter}\|(L - \alpha)^{-1}\|_{L^q(\mathbb{H}^{n})\rightarrow L^{q'}(\mathbb{H}^{n})} \leq C |\alpha|^{1/2 - 1/q}, \end{equation} if $2(n + 1)/(n + 3) \leq q < 2$;\begin{equation}\label{eqn : q-q' sobolev large spectral parameter (normal range)}\|(L - \alpha)^{-1}\|_{L^q(\mathbb{H}^{n})\rightarrow L^{q'}(\mathbb{H}^{n})} \leq C |\alpha|^{n(1/q - 1/2) - 1} ,\end{equation} if $2n/(n + 2) \leq q \leq 2(n + 1)/(n + 3)$.

On the other hand, for low energies $|\alpha| < 1$,  we have that for $ 2n/(n+2) \leq q < 2$,
\begin{equation}\label{eqn : q-q' sobolev small spectral parameter} \|(L - \alpha)^{-1}\|_{L^{q}(\mathbb{H}^{n}) \rightarrow L^{q'}(\mathbb{H}^{n})} \leq C . \end{equation}\end{theorem}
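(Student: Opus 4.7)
My plan is to represent $(L-\alpha)^{-1}$ through the spectral theorem as
$$(L-\alpha)^{-1} = \int_0^\infty \frac{dE_{\sqrt{L}}(\mu)}{\mu^2-\alpha}$$
and to estimate its kernel by exploiting the two non-Euclidean features emphasized in the introduction: sharp pointwise bounds on the spectral density $dE_{\sqrt{L}}(\mu)(x,y)$, whose Plancherel weight $|c(\mu)|^{-2}$ together with the explicit convolution representation on $\HH^n$ yields exponential decay in $d(x,y)$; and the Kunze-Stein phenomenon, which gives $L^q(\HH^n)\ast L^2(\HH^n)\hookrightarrow L^2(\HH^n)$ for every $1\le q<2$, serving as a substitute for the Stein-Tomas restriction inequality right up to the endpoint $q=2$. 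The overall strategy is a dyadic frequency decomposition around the critical frequency $\lambda_0=\Re\sqrt{\alpha}$, treated differently in the four regimes created by the dichotomies high versus low energy and Stein-Tomas versus extended range of $q$.

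For the high-energy regime $|\alpha|>1$ I would split the resolvent into a singular piece localized at $\mu\sim\lambda_0$ and regular pieces on which $|\mu^2-\alpha|$ is large by a dyadic factor. The regular pieces are controlled by pairing the spectral theorem with a Kunze-Stein convolution estimate on each dyadic scale, telescoping via the exponential decay of the spectral kernel. For the singular piece I would run a $TT^*$ argument built on the $L^q\to L^{q'}$ estimate of the spectral cluster $\chi(\sqrt{L}/\lambda_0)$. In the classical Stein-Tomas range $2n/(n+2)\le q\le 2(n+1)/(n+3)$ this cluster estimate follows the Euclidean pattern and reproduces the bound $|\alpha|^{n(1/q-1/2)-1}$; in the extended range $2(n+1)/(n+3)\le q<2$, however, Stein-Tomas is unavailable, and instead I would apply Kunze-Stein to the convolution kernel of the spectral projector, whose exponentially decaying Plancherel weight keeps the convolution summable in $L^q$, and extract the bound $|\alpha|^{1/2-1/q}$ from the natural size of the spectral annulus at height $\lambda_0$.

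The low-energy case $|\alpha|<1$ requires the full range $2n/(n+2)\le q<2$. Here $|\mu^2-\alpha|$ is bounded below by a positive constant as soon as $\mu$ stays away from $0$, so the high-frequency part is estimated exactly as above, while the low-frequency part $\mu\in[0,1]$ is controlled directly by Kunze-Stein using that the Plancherel weight vanishes like $\mu^{n-1}$ near $\mu=0$ and that the convolution kernel retains uniform exponential decay. The main obstacle I anticipate is the extended range $2(n+1)/(n+3)\le q<2$, precisely the regime with no Euclidean analogue: one must couple the Kunze-Stein estimate to the spectral cluster bound with no polynomial room to spare, relying on the exponential tail of the spectral measure both to sum the dyadic pieces and to produce the scaling $|\alpha|^{1/2-1/q}$ uniformly as $q\to 2^-$.
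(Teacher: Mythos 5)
Your outline breaks down at the one place where all the difficulty of a \emph{uniform} Sobolev estimate is concentrated: the non-integrable singularity of $(\mu^2-\alpha)^{-1}$ as $\alpha$ approaches $[0,\infty)$. A dyadic decomposition in $|\mu-\lambda_0|$ followed by the triangle inequality cannot close, because $\int_0^\infty |\mu^2-\alpha|^{-1}\,d\mu$ diverges as $\alpha$ tends to the positive reals: each dyadic annulus $|\mu-\lambda_0|\sim 2^{-j}\lambda_0$ contributes a term of the \emph{same} size $\lambda_0^{(n-1)(1/q-1/2)-1}$, and the number of relevant scales grows like $\log(\lambda_0/\operatorname{dist}(\sqrt\alpha,\mathbb{R}))$, which is unbounded. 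Nor can the ``singular piece'' be dispatched by a $TT^*$ argument on the cluster $\chi(\sqrt L/\lambda_0)$: composing a cluster estimate with the $L^2$ resolvent bound only yields $\operatorname{dist}(\alpha,[0,\infty))^{-1}$, which is precisely the non-uniform estimate one must beat (and precisely the origin of the $d(\lambda)$ factor in the Euclidean long-range eigenvalue bounds recalled in the introduction). Some cancellation in $(\mu^2-\alpha\mp \imath 0)^{-1}$ has to be exploited, and the paper does so in two different ways. At high energy it uses Stein's complex interpolation for the analytic family $H_{s,\alpha}(x)=e^{s^2}|\alpha|^s\phi(x^2)(1-x^2\pm \imath 0)^s$; the $(L^1,L^\infty)$ endpoint $s=-j-1+\imath t$ is reached after integrations by parts that dump $j$ derivatives onto the spectral measure, which is where the \emph{derivative} bounds $|(d/d\lambda)^j dE_{\sqrt L}(\lambda)|\le C\lambda^{(n-1)/2}\rho^j e^{-(n-1)\rho/2}$ of Theorem \ref{thm:kernelbounds} enter: the exponential decay absorbs the factors $\rho^j$ produced by differentiating the phase, not a dyadic series. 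At low energy it uses the explicit structure \eqref{eqn : low energy spectral measure} together with the Fourier transforms of $1/\lambda$ and $(\lambda\mp\imath\epsilon)^{-1}$ (sign function and damped Heaviside) to show that the kernel of $\phi(L/|\alpha|)(L-\alpha)^{-1}$ is pointwise $O(e^{-(n-1)\rho/2})$ \emph{before} Kunze--Stein is applied; without this cancellation even $\int_0^1 \mu^2|\mu^2-\alpha|^{-1}\,d\mu$ is not uniformly bounded (and note the Plancherel density of $\HH^n$ vanishes like $\mu^2$ near $0$, not $\mu^{n-1}$).

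Two further gaps. First, Kunze--Stein applied directly to the kernel of a spectral projector at height $\lambda_0$ cannot ``extract the bound $|\alpha|^{1/2-1/q}$'': in \eqref{KS} the exponent $q$ enters only through the $\rho$-integral, so the resulting power of $\lambda_0$ is $q$-independent (essentially $\lambda_0^{(n-1)/2}$ per unit of spectral width, which for $n\ge 4$ is worse than what is claimed). The $q$-dependent exponent $1/2-1/q$ only appears after interpolating an $(L^1,L^\infty)$ bound of size $|\alpha|^{-1/2}$ against the trivial $(L^2,L^2)$ bound, which is exactly what the complex-interpolation family is engineered to do. Second, your sketch never treats the off-spectrum regime $|\arg\alpha|\ge\theta$, which is needed both for the low-energy statement \eqref{eqn : q-q' sobolev small spectral parameter} with $\alpha$ away from $(0,\infty)$ and for the complementary cut-off $1-\phi(L/|\alpha|)$; the paper handles it by writing $(L-\beta)^{-1}$ as the Laplace transform of the Davies--Mandouvalos heat kernel, estimating the resulting Bessel-type kernel case by case, applying \eqref{KS}, and then passing to general $\alpha$ via the $L^2$-bounded multiplier $(L-\beta)(L-\alpha)^{-1}$.
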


We remark that the constant $C$ here is $q$-dependent and it blows up as $q$ goes to $2$.
Because all $q \in [2n/(n + 2), 2)$ satisfy $1/2 - 1/q < 0$ and $n(1/q - 1/2) - 1 \leq 0$, Theorem \ref{thm : Sobolev on hyperbolic} trivially implies

\begin{corollary}For all $\alpha \in \mathbb{C} \setminus [0, \infty)$,
\begin{equation}\label{eqn : q-q' sobolev all spectral parameters}\|(L - \alpha)^{-1}\|_{L^q(\mathbb{H}^{n})\rightarrow L^{q'}(\mathbb{H}^{n})} \leq C |\alpha|^{1/2 - 1/q}, \end{equation} if $2(n + 1)/(n + 3) \leq q < 2$;\begin{equation}\label{eqn : q-q' sobolev all spectral parameters (normal range)}\|(L - \alpha)^{-1}\|_{L^q(\mathbb{H}^{n})\rightarrow L^{q'}(\mathbb{H}^{n})} \leq C |\alpha|^{n(1/q - 1/2) - 1} ,\end{equation} if $2n/(n + 2) \leq q \leq 2(n + 1)/(n + 3)$.
\end{corollary}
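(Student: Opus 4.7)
The plan is to assemble the corollary by combining the two regimes $|\alpha|>1$ and $|\alpha|<1$ of Theorem \ref{thm : Sobolev on hyperbolic}, exploiting that the two exponents appearing in the high-energy bounds are both non-positive on the respective $q$-ranges. The regime $|\alpha|>1$ is already exactly the statement of the corollary, so the only work is to upgrade the uniform low-energy bound \eqref{eqn : q-q' sobolev small spectral parameter} to a bound of the form $C|\alpha|^{\beta}$ with a non-positive exponent $\beta$, which is automatic since $|\alpha|^{\beta}\ge 1$ whenever $|\alpha|\le 1$.

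First I would fix $q\in [2(n+1)/(n+3), 2)$. The exponent $\beta_1=1/2-1/q$ is strictly negative because $q<2$. For $|\alpha|>1$ the bound \eqref{eqn : q-q' sobolev all spectral parameters} is given directly by \eqref{eqn : q-q' sobolev large spectral parameter}. For $|\alpha|\le 1$, I would invoke \eqref{eqn : q-q' sobolev small spectral parameter} and simply note that $|\alpha|^{\beta_1}\ge 1$, whence $C\le C|\alpha|^{\beta_1}$, which gives \eqref{eqn : q-q' sobolev all spectral parameters} in this regime.

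Next I would fix $q\in [2n/(n+2), 2(n+1)/(n+3)]$. Here the exponent is $\beta_2=n(1/q-1/2)-1$; the hypothesis $q\ge 2n/(n+2)$ is equivalent to $1/q-1/2\le 1/n$, so $\beta_2\le 0$. For $|\alpha|>1$ the bound \eqref{eqn : q-q' sobolev all spectral parameters (normal range)} follows from \eqref{eqn : q-q' sobolev large spectral parameter (normal range)}, and for $|\alpha|\le 1$ the same observation $|\alpha|^{\beta_2}\ge 1$ lets me promote the uniform bound \eqref{eqn : q-q' sobolev small spectral parameter} to \eqref{eqn : q-q' sobolev all spectral parameters (normal range)}. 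The boundary case $|\alpha|=1$ is consistent with either regime since $|\alpha|^{\beta_i}=1$.

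There is no real obstacle here: the corollary is a packaging statement and the sole ingredient is the sign of the two exponents on the given $q$-intervals. Accordingly, I would present the proof as a couple of lines after verifying $\beta_1<0$ and $\beta_2\le 0$, with no further estimation required.
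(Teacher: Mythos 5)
Your argument is correct and is exactly the paper's own reasoning: the corollary is presented there as a trivial consequence of Theorem \ref{thm : Sobolev on hyperbolic}, justified by the single observation that $1/2-1/q<0$ and $n(1/q-1/2)-1\leq 0$ on the respective $q$-ranges, so the uniform low-energy bound \eqref{eqn : q-q' sobolev small spectral parameter} is dominated by $C|\alpha|^{\beta}$ when $|\alpha|\leq 1$. No further comment is needed.
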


In contrast with the results on (asymptotically) Euclidean space, the exponent $q$ in the results on $\mathbb{H}^n$ can be arbitrarily close to $2$. We can interpret this striking phenomenon as the consequences of two non-Euclidean features of $\mathbb{H}^n$: the Kunze-Stein phenomenon and the exponential decay of the spectral measure.

The Kunze-Stein phenomenon, proved by Cowling \cite{Cowling} on connected semi-simple Lie groups $\mathbb{G}$ with finite center, says that the convolution inequality, $$L^2(\mathbb{G}) \ast L^p(\mathbb{G}) \subset L^2(\mathbb{G}),$$ holds for $p \in [1, 2)$.  Note that above inequalities on Euclidean space, by Young's equality, are only valid for $p = 1$. If $\mathbb{G}$ is in addition of real rank $1$, it can be refined in terms of Lorentz spaces. Cowling-Meda-Setti \cite{Cowling-Meda-Setti} and Ionescu \cite{Ionescu} showed $$L^{p,a}(\mathbb{G}) \ast L^{p, b}(\mathbb{G}) \subset L^{p, c}(\mathbb{G}), $$ provided
$$\left\{ \begin{array}{ll}
1/a + 1/b \geq 1 + 1/c, &\mbox{if $p \in (1, 2)$}\\
a = 1, b = 1, c = \infty, &\mbox{if $p = 2$}
\end{array} \right. .$$ A useful corollary of this on $\mathbb{H}^n$, proved by Anker-Pierfelice-Vallarino \cite{Anker-Pierfelice-Vallarino-CPDE-2011}, is that for a radial kernel $\kappa(\rho)$
\begin{equation}
\| f * \kappa \|_{L^{q'}(\HH^{n})} \leq C_q \| f \|_{L^{q}(\HH^{n})} \bigg( \int_0^\infty (\sinh \rho)^{n-1} (1+\rho) e^{-(n-1)\rho/2} |\kappa(\rho)|^{q'/2} \, d\rho \bigg)^{2/q'},
\label{KS}\end{equation} with $1 < q \leq 2$.  Applying \eqref{KS} to the resolvent kernel $e^{\pm \imath \rho \lambda}/\sinh(\rho)$, with sufficiently small $|\Im \lambda|$, on $\mathbb{H}^3$  immediately proves the $(L^q, L^{q'})$ Sobolev inequalities for any $6/5 < q < 2$.

The proof, by Kenig-Ruiz-Sogge, of uniform resolvent estimates is closely tied to the Stein-Tomas restriction theorem \cite{Beijing lecture, Tomas}, which says for any function $f \in L^p(\mathbb{R}^n)$ with $1 \leq p \leq 2(n + 1)/(n + 3)$ one can meaningfully restrict its Fourier transform $\hat{f}$ to the sphere $\mathbb{S}^{n-1}$. By a $TT^\ast$ argument, one can rewrite the restriction theorem, in terms of the spectral measure, as $$\|dE_{\sqrt{\Delta_{\mathbb{R}^{n}}}}(\lambda)\|_{L^p(\mathbb{R}^n) \rightarrow L^{p'}(\mathbb{R}^n)} \leq C \lambda^{n(1/p - 1/p') - 1},$$ provided $1 \leq p \leq 2(n + 1)/(n + 3)$. This inequality was proved by Guillarmou-Hassell-Sikora \cite{Guillarmou-Hassell-Sikora} (see also \cite{Chen-MathZ} for a complete proof) on asymptotically Euclidean spaces via the pointwise estimates \eqref{eqn : spectral measure on Euclidean} for the spectral measure.

Interestingly, all of these estimates at high energies on real hyperbolic spaces turn out to be better than on Euclidean spaces. Hassell and the author \cite{Chen-Hassell2} proved
\begin{theorem}\label{thm:kernelbounds} Let $\rho$ be the distance function between $z, z' \in \mathbb{H}^{n}$.
For  $\lambda \geq 1$, the spectral measure of $\sqrt{L}$ satisfies the following pointwise estimates,
\begin{equation}\label{eqn : spectral measure upper bound}\Big|  \bigg( \big( \frac{d}{d\lambda} \big)^j dE_{\sqrt{L}}(\lambda) \bigg) (z,z')  \Big| \leq
\left\{\begin{array}{ll}
C \lambda^{n - 1 -j} (1 + \rho \lambda)^{-(n - 1)/2+j},& \text{ for } \rho \leq 1 \\
C \lambda^{(n - 1)/2} \rho^{j} e^{-(n - 1)\rho/2},& \text{ for } \rho
\geq 1.\end{array}\right.
\end{equation}
Moreover, one has the Stein-Tomas estimates at high energies, for $1 \leq p < 2$, \begin{equation}\label{eqn:high energy restriction}
\|dE_{\sqrt{L}} (\lambda)\|_{L^p(\HH^{n}) \rightarrow L^{p^\prime}(\HH^{n})} \leq
\left\{\begin{array}{ll}
C \lambda^{n(1/p - 1/p') - 1}, & 1 \leq p \leq \frac{2(n+1)}{n+3}, \\
C \lambda^{(n - 1)(1/p - 1/2)}, & \frac{2(n+1)}{n+3} \leq p < 2.
\end{array}\right.
\end{equation}
\end{theorem}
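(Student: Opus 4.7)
The proof splits into two largely independent parts: the pointwise kernel bounds (\ref{eqn : spectral measure upper bound}) and the restriction-type operator bounds (\ref{eqn:high energy restriction}). The overall strategy is to exploit the rank-one symmetric structure of $\HH^{n}$, which makes the spectral measure a radial convolution kernel with an explicit Harish-Chandra presentation, and then to interpolate in order to cover the full range of exponents up to $p=2$.

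\textbf{Pointwise estimates.} By Harish-Chandra's Plancherel formula, the kernel of $dE_{\sqrt{L}}(\lambda)$ is the radial convolution kernel
\begin{equation*}
c_n\,|\mathbf{c}(\lambda)|^{-2}\,\phi_{\lambda}(\rho),
\end{equation*}
where $\phi_{\lambda}$ is the elementary spherical function (a Jacobi function) and $\mathbf{c}(\lambda)$ is the Harish-Chandra $c$-function, with $|\mathbf{c}(\lambda)|^{-2}\asymp\lambda^{n-1}$ for $\lambda\geq 1$. In the short-range regime $\rho\leq 1$, I would apply the Mehler-type integral representation of $\phi_{\lambda}$ and stationary phase to obtain $|\phi_{\lambda}(\rho)|\leq C(1+\rho\lambda)^{-(n-1)/2}$, with an extra factor of $\rho^{j}$ per $\lambda$-derivative; multiplying by $|\mathbf{c}(\lambda)|^{-2}$ matches the first line of (\ref{eqn : spectral measure upper bound}). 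In the long-range regime $\rho\geq 1$, I would use the outgoing/incoming decomposition $\phi_{\lambda}=\mathbf{c}(\lambda)\Phi_{\lambda}+\mathbf{c}(-\lambda)\Phi_{-\lambda}$, where each Harish-Chandra series $\Phi_{\pm\lambda}$ is an oscillatory factor $e^{\pm i\lambda\rho}$ times a convergent expansion in $e^{-2\rho}$ starting with $e^{-(n-1)\rho/2}$. Combined with $|\mathbf{c}(\pm\lambda)|\asymp\lambda^{-(n-1)/2}$, this gives the second line, and differentiating each series term by term shows that each $\lambda$-derivative brings down only a single factor of $\rho$.

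\textbf{Stein--Tomas restriction estimates.} For the subcritical range $1\leq p\leq 2(n+1)/(n+3)$, I would split the kernel into its short-range ($\rho\leq 1$) and long-range ($\rho\geq 1$) parts. The short-range part has essentially the same form as the corresponding kernel on $\RR^{n}$, so the classical $TT^{*}$/stationary-phase Stein--Tomas argument applies and yields the bound $\lambda^{n(1/p-1/p')-1}$; the long-range part is easily absorbed thanks to the exponential decay $e^{-(n-1)\rho/2}$. For the supercritical range $2(n+1)/(n+3)\leq p<2$, where the Euclidean Stein--Tomas argument fails, I would apply the Kunze--Stein inequality (\ref{KS}) directly to the radial kernel of $dE_{\sqrt{L}}(\lambda)$: the exponential decay in the long-range regime is strong enough that, weighted by $(\sinh\rho)^{n-1}(1+\rho)e^{-(n-1)\rho/2}$ and raised to the power $q'/2>1$, the defining integral converges for every $q\in(1,2)$. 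Combining this bound with the endpoint bound at $p=2(n+1)/(n+3)$ via Riesz--Thorin interpolation produces the exponent $\lambda^{(n-1)(1/p-1/2)}$, a simple arithmetic check.

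\textbf{Main obstacle.} The principal technical difficulty is the uniform analysis of the spherical function $\phi_{\lambda}(\rho)$ and its $\lambda$-derivatives in the transition region where $\rho\asymp 1$ and $\rho\lambda\asymp 1$: neither the small-$\rho$ Bessel approximation nor the large-$\rho$ Harish-Chandra series is sharp there, and gluing them together requires a uniform asymptotic expansion of the underlying Jacobi function, as well as careful bookkeeping to verify that each $\lambda$-derivative costs only one factor of $\rho$ rather than a worse power. Once the pointwise bounds are established, the remaining steps of the Stein--Tomas argument are essentially routine.
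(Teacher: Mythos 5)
First, note that the paper does not prove this theorem: it is imported verbatim from \cite{Chen-Hassell2}, where the pointwise bounds \eqref{eqn : spectral measure upper bound} come from the Melrose--S\'a Barreto--Vasy semiclassical parametrix for the resolvent rather than from special functions. Your route to the pointwise bounds via the Plancherel density $|\mathbf{c}(\lambda)|^{-2}\asymp\lambda^{n-1}$, the Bessel-type uniform asymptotics of $\phi_\lambda$ for $\rho\le 1$, and the Harish--Chandra series for $\rho\ge 1$ is a legitimate alternative on exact $\HH^n$ (it would not generalize to asymptotically hyperbolic manifolds, but that is not needed here), and you correctly flag the transition region as the delicate point. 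That half of the proposal is fine in outline.

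The restriction half has a genuine gap in the supercritical range $2(n+1)/(n+3)\le p<2$, and the same issue already contaminates your treatment of the long-range part in the subcritical range. The Kunze--Stein inequality \eqref{KS} only sees $|\kappa(\rho)|$, and applying it to the pointwise bounds \eqref{eqn : spectral measure upper bound} gives, for every $q\in(1,2)$, no better than $C\lambda^{(n-1)/2}$: the region $1/\lambda\lesssim\rho\lesssim 1$ contributes $\lambda^{(n-1)q'/4}\int_{1/\lambda}^1\rho^{\,n-1-(n-1)q'/4}\,d\rho\asymp\lambda^{(n-1)q'/4}$ (the exponent of $\rho$ exceeds $-1$ throughout the supercritical range), hence $\lambda^{(n-1)/2}$ after taking the power $2/q'$, and the region $\rho\ge1$ contributes the same. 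Since $\lambda^{(n-1)/2}$ is already larger than the target $\lambda^{(n-1)(1/p-1/2)}\le\lambda^{(n-1)/(n+1)}$ everywhere in the supercritical range, interpolating it against the endpoint bound $\lambda^{(n-1)/(n+1)}$ at $p_0=2(n+1)/(n+3)$ only produces exponents \emph{between} $(n-1)/(n+1)$ and $(n-1)/2$, i.e.\ the bound you get \emph{worsens} as $p\to2$, whereas the claimed exponent tends to $0$. No argument based solely on the absolute value of the kernel can do better, because the improvement beyond $p_0$ comes from the oscillation $e^{\pm i\lambda\rho}$, not from size.

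The mechanism that actually works --- and the reason the theorem records the $\lambda$-derivative bounds at all --- is Stein's complex interpolation applied to the analytic family $\phi(L/\lambda^2)(1-L/\lambda^2\pm i0)^{s}$, exactly as the present paper does for the resolvent in the proof of \eqref{eqn : q to q'}: one integrates by parts $j$ times in the spectral variable, which costs a factor $\rho^{j}$ per derivative of the spectral measure. On $\RR^n$ this forces $j\le(n-1)/2$ and hence $p\le 2(n+1)/(n+3)$; on $\HH^n$ the factor $\rho^{j}$ is absorbed by $e^{-(n-1)\rho/2}$, so one may take any $j>(n-1)/2$ and let $q=2(j+1)/(j+2)\to2$, recovering $\lambda^{(n-1)(1/p-1/2)}$ after interpolating with the trivial $L^2\to L^2$ bound for purely imaginary $s$. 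You should replace the Kunze--Stein step by this argument (Kunze--Stein is the right tool elsewhere in the paper, for the off-spectrum and low-energy resolvent bounds, but not here).
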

Here the exponential decay in space of the spectral measure was exploited to prove the broader Stein-Tomas estimates for $\frac{2(n+1)}{n+3} \leq p < 2$. Therefore it is natural to expect such better spectral measure estimates will lead to a larger set of exponents for uniform Sobolev inequalities.

Resolvent estimates on real hyperbolic spaces have been studied before. For exponents away from $2$, Huang-Sogge \cite{Huang-Sogge} generalized \eqref{eqn:Kenig-Ruiz-Sogge} at high energies to $\mathbb{H}^{n + 1}$. For $p = 2$, Melrose-S\'{a} Barreto-Vasy \cite{Melrose-Sa Barreto-Vasy} established weighted $L^2$ estimates. The weight (or a compact cut-off) is essential for $p = 2$, since $\lim_{p \rightarrow 2 - 0}\|R(\lambda)\|_{L^p \rightarrow L^{p'}}$ blows up. However, we believe that Theorem \ref{thm : Sobolev on hyperbolic} is the first result of uniform Sobolev inequalities with non-Euclidean features of real hyperbolic spaces.

Moreover, one should be able to generalize our results to general non-trapping asymptotically hyperbolic manifolds (even if there are pairs of conjugate points) by employing the diagonal estimates for the spectral measure in \cite{Chen-Hassell2} as well as adapting the off-diagonal arguments in \cite{Guillarmou-Hassell}.

As was mentioned, the Sobolev inequalities can be applied to tackling a number of problems. In the present paper, we are concerned about the eigenvalue bounds for Schr\"odinger operators. Specifically, given a complex-valued potential $V \in L^p(\mathbb{H}^n)$ with $p \geq n/2$, what can we say about the spectrum of the Schr\"odinger operator $\Delta + V$? Adapting previous work for abstract operators on Hilbert spaces, due to Frank \cite{Frank-Simon III}, one can easily prove that the spectrum of $\Delta + V$ consists of the essential spectrum of $\Delta$ and isolated eigenvalues of finite algebraic multiplicity. The essential spectrum of $\Delta$ is the half real line $((n - 1)^2/4, \infty) \subset \mathbb{R}$. Nonetheless, what do we know about those isolated eigenvalues from the potential $V$?

This problem has been intensively studied on Euclidean space $\mathbb{R}^n$, including  the location of individual eigenvalues and the distribution of eigenvalues. Here we are interested in the location of individual eigenvalues, that is to prove the eigenvalues $\lambda$ are located in a ball, the size of which is controlled by the norm of the potential. In $\mathbb{R}^1$, Abramov-Aslanyan-Davies \cite{Abramov-Aslanyan-Davies} proved that
$$|\lambda|^{1/2} \leq C \int_{\mathbb{R}} |V|.$$ In higher dimensions $\mathbb{R}^n$, we write $p = \gamma + n/2$ in connection with Lieb-Thirring inequalities. Frank \cite{Frank-BLMS}, Frank-Sabin \cite{Frank-Sabin}, Frank-Simon \cite{Frank-Simon II} studied the short range case $0 < \gamma \leq 1/2$ and showed that $$|\lambda|^\gamma \leq C_{\gamma, n} \int_{\mathbb{R}^n} |V|^{\gamma + n/2}, \quad \mbox{for $n \geq 2$}.$$ On the other hand, Frank \cite{Frank-Simon III} established the following long range result for $\gamma > 1/2$, $$d(\lambda)^{\gamma - 1/2} |\lambda|^{1/2} \leq C_{\gamma, n} \int_{\mathbb{R}^n} |V|^{\gamma + n/2},$$ where $d(\lambda) = \text{dist}\,(\lambda, [0, \infty))$. Furthermore, Guillarmou-Hassell-Krupchyk \cite{Guillarmou-Hassell-Krupchyk} generalized all of these results to a large class of non-trapping asymptotically Euclidean manifolds for $n \geq 3$. We also refer the reader to \cite{Frank-Simon III, Guillarmou-Hassell-Krupchyk} for a complete list of references. Furthermore, this also has been considered on $\mathbb{H}^2$. Hansmann \cite{Hansmann} recently generalized these classical results to the hyperbolic plane.

In the present paper, we consider analogous eigenvalue bounds on $\mathbb{H}^n$. On the one hand, we prove similar short range results, aligned with the Sobolev inequality \eqref{eqn : q-q' sobolev all spectral parameters (normal range)},  \begin{theorem}\label{thm : short range}For $0 < \gamma \leq 1/2$ and $n \geq 3$, we have $$|\lambda|^\gamma \leq C_{\gamma, n} \int_{\mathbb{H}^n} |V|^{\gamma + n/2}.$$ \end{theorem}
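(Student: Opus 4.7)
The plan is to combine the standard Birman-Schwinger principle with the uniform Sobolev bound \eqref{eqn : q-q' sobolev all spectral parameters (normal range)}, following the scheme of Frank \cite{Frank-BLMS} and Frank-Simon \cite{Frank-Simon II}. Let $\lambda$ be an eigenvalue of $\Delta + V$ with eigenfunction $\psi$ and set $\alpha = \lambda - (n-1)^{2}/4$, so $\alpha \in \mathbb{C}\setminus[0,\infty)$ and $L - \alpha = \Delta - \lambda$. Factor $V = U|V|$ with $U = \sgn V$ and write $w = |V|^{1/2}$. Manipulating the eigenvalue equation $\psi = -(L-\alpha)^{-1}V\psi$ shows that $\phi = w\psi \in L^{2}(\mathbb{H}^{n})$ is a nonzero fixed point of the operator $-K(\lambda) := -w(L-\alpha)^{-1}Uw$, so $\|K(\lambda)\|_{L^{2}\to L^{2}} \geq 1$.

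Set $p = \gamma + n/2$ and choose $q \in (1,2)$ by $1/q - 1/2 = 1/(2p)$. As $\gamma$ runs over $(0,1/2]$, this $q$ lies in $[2n/(n+2),\,2(n+1)/(n+3)]$, which is exactly the range of validity of \eqref{eqn : q-q' sobolev all spectral parameters (normal range)}. By H\"older's inequality, multiplication by $w$ is bounded $L^{2}\to L^{q}$ and $L^{q'}\to L^{2}$, both with norm $\|V\|_{L^{p}}^{1/2}$. Composing and invoking the uniform Sobolev estimate yields
\begin{equation*}
1 \leq \|K(\lambda)\|_{L^{2}\to L^{2}} \leq \|V\|_{L^{p}}\,\|(L-\alpha)^{-1}\|_{L^{q}\to L^{q'}} \leq C_{\gamma,n}\,\|V\|_{L^{p}}\,|\alpha|^{\,n(1/q-1/2)-1}.
\end{equation*}
Substituting $n(1/q - 1/2) - 1 = n/(2p) - 1 = -2\gamma/(2\gamma+n)$ and raising to the $(2\gamma+n)$-th power gives
\begin{equation*}
|\alpha|^{\gamma} \leq C'_{\gamma,n}\,\|V\|_{L^{p}}^{p} = C'_{\gamma,n}\int_{\mathbb{H}^{n}} |V|^{\gamma+n/2}.
\end{equation*}

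It remains to upgrade the bound from $|\alpha|^{\gamma}$ to $|\lambda|^{\gamma}$. For $|\lambda| \geq (n-1)^{2}/2$ one has $|\alpha| \geq |\lambda| - (n-1)^{2}/4 \geq |\lambda|/2$, so $|\lambda|^{\gamma} \leq 2^{\gamma}|\alpha|^{\gamma}$. For $|\lambda| < (n-1)^{2}/2$ the quantity $|\lambda|^{\gamma}$ is a priori bounded, and the same Birman-Schwinger argument combined with the low-energy estimate \eqref{eqn : q-q' sobolev small spectral parameter} forces $\|V\|_{L^{p}} \geq c_{n} > 0$, so an adjustment of the constant completes the estimate in this regime. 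The principal technical point will be the rigorous implementation of Birman-Schwinger in this non-self-adjoint setting: one must verify that $w\psi \in L^{2}(\mathbb{H}^{n})$ and that it is nonzero. These assertions follow from elliptic regularity for $\psi$ combined with a weak unique continuation property on $\supp V \subset \mathbb{H}^{n}$, but they deserve to be spelled out explicitly since the potential is only assumed to lie in $L^{p}$.
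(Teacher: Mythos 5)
Your exponent bookkeeping is exactly the paper's: the $q$ you define by $1/q-1/2=1/(2p)$ with $p=\gamma+n/2$ coincides with the exponent the paper extracts from $\gamma+n/2=q/(2-q)$, and your Birman--Schwinger inequality $1\le\|w(L-\alpha)^{-1}Uw\|_{L^{2}\to L^{2}}\le\|V\|_{L^{p}}\|(L-\alpha)^{-1}\|_{L^{q}\to L^{q'}}$ is the dual formulation of the paper's direct bootstrap $\|\psi\|_{L^{q'}}\le\|(L-\lambda)^{-1}\|_{L^{q}\to L^{q'}}\|V\|_{L^{p}}\|\psi\|_{L^{q'}}$ followed by division by $\|\psi\|_{L^{q'}}$. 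The two routes are interchangeable, but the paper's version only needs $\psi\in L^{q'}$ with $0<\|\psi\|_{L^{q'}}<\infty$, which it gets from the m-sectoriality proposition (domain in $H^{1}$) plus Sobolev embedding; your version additionally needs $w\psi\in L^{2}\setminus\{0\}$. For the non-vanishing, no unique continuation is required: if $V\psi\equiv 0$ then $(L-\alpha)\psi=0$, and since $L$ has purely absolutely continuous spectrum on $\mathbb{H}^{n}$ this forces $\psi=0$; membership of $w\psi$ in $L^{2}$ is again H\"older plus $\psi\in L^{q'}$. Your closing conversion from $|\alpha|$ to $|\lambda|$ is also moot in the paper's formulation, which bounds eigenvalues of $L+V$ rather than of $\Delta+V$, though your patch is acceptable if one insists on the latter.

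The genuine gap is the clause ``so $\alpha\in\mathbb{C}\setminus[0,\infty)$''. Nothing rules out an eigenvalue embedded in the continuous spectrum, i.e.\ $\lambda\in[(n-1)^{2}/4,\infty)$ real, equivalently $\alpha\in[0,\infty)$. For such $\alpha$ the resolvent $(L-\alpha)^{-1}$ does not exist as a bounded operator on $L^{2}$, so neither the identity $\psi=-(L-\alpha)^{-1}V\psi$ nor your operator $K(\lambda)$ is defined, and the argument breaks down in precisely the case that is hardest to exclude. The paper devotes half of its proof to this point: it sets $\psi_{\epsilon}=(L-\lambda-\imath\epsilon)^{-1}(L-\lambda)\psi$, shows $\psi_{\epsilon}\to\psi$ in $L^{2}$ via the spectral theorem and dominated convergence, applies the uniform Sobolev bound (uniform in $\epsilon$) to get $\|\psi_{\epsilon}\|_{L^{q'}}\le C|\lambda|^{-\gamma/p}\|V\|_{L^{p}}\|\psi\|_{L^{q'}}$, and passes to a weak-$*$ limit in $L^{q'}$ to transfer the bound to $\psi$. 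You need this limiting-absorption step (or its Birman--Schwinger analogue with $(L-\lambda\mp\imath\epsilon)^{-1}$) to cover all eigenvalues; your final case analysis on $|\lambda|$ versus $(n-1)^{2}/2$ does not address it, since embedded eigenvalues may have arbitrarily large modulus.
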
 On the other hand, we, by means of the Sobolev inequality \eqref{eqn : q-q' sobolev all spectral parameters}, obtain the following better long range results  \begin{theorem}\label{thm : long range}For $\gamma \geq 1/2$ and $n \geq 3$, we have $$|\lambda|^{1/2} \leq C_{\gamma, n} \int_{\mathbb{H}^n} |V|^{\gamma + n/2}.$$\end{theorem}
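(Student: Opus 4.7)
The approach follows the Birman–Schwinger framework as in the Euclidean work of Frank \cite{Frank-Simon III}, with the crucial input now being the endpoint-free resolvent estimate \eqref{eqn : q-q' sobolev all spectral parameters}. Write $p = \gamma + n/2$ and let $\lambda$ be an eigenvalue of the shifted Schrödinger operator $L + V$ (equivalently, $\lambda - (n-1)^2/4$ is an eigenvalue of $\Delta + V$), with eigenfunction $\psi$, so $(L - \lambda)\psi = -V\psi$. Factor $V = UW^2$ with $W = |V|^{1/2}$ and $U = \sgn V$, and set $\phi = W\psi$. The Birman–Schwinger identity
$$\phi = -W(L - \lambda)^{-1} U W \phi$$
shows that $-1$ is an eigenvalue of the operator $K(\lambda) := W(L - \lambda)^{-1}UW$ on $L^2(\HH^n)$, whence $\|K(\lambda)\|_{L^2 \to L^2} \geq 1$.

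Next I would pick the exponent $q \in [1,2)$ by $1/q = 1/2 + 1/(2p)$. The hypothesis $\gamma \geq 1/2$ is exactly $p \geq (n+1)/2$, which translates to $q \geq 2(n+1)/(n+3)$, placing us squarely in the range where \eqref{eqn : q-q' sobolev all spectral parameters} applies. By Hölder, multiplication by $W$ maps $L^{q'}(\HH^n) \to L^2(\HH^n)$ and $L^2(\HH^n) \to L^q(\HH^n)$ with operator norm $\|V\|_{L^p}^{1/2}$, so
$$1 \leq \|K(\lambda)\|_{L^2 \to L^2} \leq \|V\|_{L^p}\, \|(L - \lambda)^{-1}\|_{L^q \to L^{q'}} \leq C\|V\|_{L^p}\,|\lambda|^{1/2 - 1/q} = C\|V\|_{L^p}\,|\lambda|^{-1/(2p)}.$$
Rearranging and raising to the $p$-th power gives $|\lambda|^{1/2} \leq C_{\gamma,n} \int_{\HH^n} |V|^{\gamma + n/2}$, as claimed.

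The step requiring genuine care is the rigorous justification of the Birman–Schwinger equivalence: one must ensure $\phi = W\psi \in L^2(\HH^n)$ and $\phi \neq 0$. A priori we only have $\psi \in L^2$, which does not immediately give $W\psi \in L^2$ when $V \in L^p$. I would handle this by a standard bootstrap on the eigenfunction identity $\psi = -(L - \lambda)^{-1}V\psi$, using Hölder together with resolvent mapping properties to iteratively improve the integrability of $\psi$ from $L^2$ up to $L^{q'}$, from which $W\psi \in L^2$ follows at once. Alternatively one may approximate $V$ by bounded compactly supported $V_\varepsilon$ and pass to the limit, exploiting stability of the discrete spectrum and continuity of $K(\lambda)$. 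The conceptual point is that the improvement over the Euclidean long range bound — where Frank is forced to insert a factor $d(\lambda)^{\gamma - 1/2}$ that degenerates as $\lambda$ approaches $[0,\infty)$ — is entirely due to the fact that on $\HH^n$ the resolvent bound \eqref{eqn : q-q' sobolev all spectral parameters} extends all the way up to $q \to 2^-$, a range unavailable in the Euclidean setting.
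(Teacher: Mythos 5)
Your core estimate is correct and is, in substance, the paper's own argument in Birman--Schwinger packaging: the paper writes $\psi = -(L-\lambda)^{-1}(V\psi)$ and runs the chain $\|\psi\|_{L^{q'}} \le \|(L-\lambda)^{-1}\|_{L^{q}\to L^{q'}}\,\|V\|_{L^{\gamma+n/2}}\,\|\psi\|_{L^{q'}}$ before dividing by $\|\psi\|_{L^{q'}}$, which is exactly your bound $1\le \|K(\lambda)\|\le \|V\|_{L^{\gamma+n/2}}\|(L-\lambda)^{-1}\|_{L^q\to L^{q'}}$; the paper's Lebesgue exponent, defined by $\gamma+n/2 = p/(2-p)$, coincides with your $q$ given by $1/q = 1/2 + 1/(2(\gamma+n/2))$, and the exponent arithmetic $|\lambda|^{1/2-1/q}=|\lambda|^{-1/(2(\gamma+n/2))}$ matches. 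The integrability issue you flag is resolved more directly than by a bootstrap or approximation: Proposition \ref{prop : m-sectorial} puts $\psi$ in $H^1(\HH^n)$, and the embedding $H^1(\HH^n)\hookrightarrow L^r(\HH^n)$ for $2\le r\le 2n/(n-2)$ gives $\psi\in L^{q'}$, hence $W\psi\in L^2$, since $q'\le 2(n+1)/(n-1)<2n/(n-2)$ in your range; and $W\psi\ne 0$, since $W\psi=0$ would force $V\psi=0$ and make $\lambda$ an $L^2$-eigenvalue of $L$, which has none.

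The one genuine omission is the case $\lambda\in[0,\infty)$, i.e.\ an eigenvalue sitting on the essential spectrum. There $(L-\lambda)^{-1}$ does not exist as a bounded operator, the estimate \eqref{eqn : q-q' sobolev all spectral parameters} is only stated for $\alpha\in\CC\setminus[0,\infty)$, and your operator $K(\lambda)$ is undefined, so the argument as written does not close. The paper handles this by regularizing: it sets $\psi_\epsilon=(L-\lambda-\imath\epsilon)^{-1}(L-\lambda)\psi$, shows $\psi_\epsilon\to\psi$ in $L^2$ by the spectral theorem and dominated convergence, applies the uniform resolvent bound at $\lambda+\imath\epsilon$ to get $\|\psi_\epsilon\|_{L^{q'}}\le C|\lambda|^{1/2-1/q}\|V\|_{L^{\gamma+n/2}}\|\psi\|_{L^{q'}}$ uniformly in $\epsilon$, and passes to a weak-$*$ limit in $L^{q'}$. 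You need the analogous limiting step (e.g.\ working with $K(\lambda+\imath\epsilon)$ and letting $\epsilon\to 0$) for your proof to cover all eigenvalues claimed by the theorem.
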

In addition, eigenvalues do not exist if the potential is too 'small'.\begin{theorem}\label{thm : no eigenvalue}
 If $\|V\|_{L^{\gamma + n/2}(\mathbb{H}^n)}$ with $\gamma \geq 0$ is sufficiently small, the Schr\"odinger operator $\Delta + V$ has no eigenvalues.
\end{theorem}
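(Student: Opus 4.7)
The plan is a uniform Birman--Schwinger argument driven by Theorem~\ref{thm : Sobolev on hyperbolic}. Suppose $\lambda$ is an $L^2$-eigenvalue of $\Delta + V$ with eigenfunction $\psi$, and set $\alpha = \lambda - (n-1)^2/4 \in \CC \setminus [0, \infty)$. Factorise $V = V_1 V_2$ with $V_2 = |V|^{1/2}$ and $V_1 = \sgn(V)\,|V|^{1/2}$. The identity $\psi = -(L-\alpha)^{-1}(V\psi)$ then rearranges to $(I + K(\alpha))(V_2 \psi) = 0$, where the Birman--Schwinger operator is
\[
K(\alpha) := V_2 \, (L - \alpha)^{-1} \, V_1.
\]
Hence it suffices to prove $\|K(\alpha)\|_{L^2 \to L^2} < 1$ uniformly in $\alpha \in \CC \setminus [0, \infty)$, since then $-1$ lies in the resolvent set of $K(\alpha)$ and $V_2\psi$ must vanish. (Embedded eigenvalues in $[(n-1)^2/4, \infty)$ are ruled out by the same estimate combined with a limiting-absorption argument.)

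Set $p = \gamma + n/2$ and define $q \in [2n/(n+2), 2)$ by $1/q = 1/2 + 1/(2p)$. Two applications of H\"older's inequality yield
\[
\|K(\alpha)\|_{L^2 \to L^2} \leq \|V_2\|_{L^{2p}} \, \|(L-\alpha)^{-1}\|_{L^q \to L^{q'}} \, \|V_1\|_{L^{2p}} = \|V\|_{L^{\gamma + n/2}} \, \|(L-\alpha)^{-1}\|_{L^q \to L^{q'}},
\]
since $\|V_i\|_{L^{2p}} = \|V\|_{L^p}^{1/2}$ for $i = 1, 2$. Theorem~\ref{thm : Sobolev on hyperbolic} now supplies a uniform bound on the last factor. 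For $|\alpha| < 1$, the low-energy inequality \eqref{eqn : q-q' sobolev small spectral parameter} gives $\|(L-\alpha)^{-1}\|_{L^q \to L^{q'}} \leq C$. For $|\alpha| \geq 1$ and $\gamma \in [0, 1/2]$ (so $q \in [2n/(n+2), 2(n+1)/(n+3)]$), the bound \eqref{eqn : q-q' sobolev large spectral parameter (normal range)} contributes the factor $|\alpha|^{n(1/q - 1/2) - 1} = |\alpha|^{-2\gamma/(2\gamma + n)} \leq 1$; for $|\alpha| \geq 1$ and $\gamma \geq 1/2$ (so $q \in [2(n+1)/(n+3), 2)$), the bound \eqref{eqn : q-q' sobolev large spectral parameter} contributes $|\alpha|^{1/2 - 1/q} = |\alpha|^{-1/(2\gamma+n)} \leq 1$. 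In every case,
\[
\|K(\alpha)\|_{L^2 \to L^2} \leq C_{\gamma, n} \, \|V\|_{L^{\gamma + n/2}(\HH^n)},
\]
and requiring $\|V\|_{L^{\gamma+n/2}} < 1/C_{\gamma, n}$ finishes the proof.

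The main technical obstacle is the rigorous justification of the Birman--Schwinger correspondence for complex $V$: one must upgrade the a~priori information $\psi \in L^2$ and $V \in L^p$ to $V_2 \psi \in L^2$, so that the derived equation $(I + K(\alpha))(V_2\psi) = 0$ is genuinely an eigenvalue problem in $L^2$. This is handled by a standard bootstrap which feeds $\psi = -(L-\alpha)^{-1}(V\psi)$ through the Sobolev mapping properties from Theorem~\ref{thm : Sobolev on hyperbolic}, progressively improving the integrability of $\psi$ and placing $V_2\psi$ in $L^2$ after finitely many iterations.
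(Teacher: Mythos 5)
Your argument is correct, and it reaches the same destination as the paper by a mildly different route. The paper argues directly on the eigenfunction: it first invokes the already-proved eigenvalue bounds (Theorems \ref{thm : short range} and \ref{thm : long range}) to confine $\lambda$ to a bounded set, then applies the low-energy Sobolev inequality \eqref{eqn : q-q' sobolev small spectral parameter} to the identity $\psi = -(L-\lambda)^{-1}(V\psi)$ to obtain $\|\psi\|_{L^{p'}} \leq C\|V\|_{L^{\gamma+n/2}}\|\psi\|_{L^{p'}}$, a contradiction for small $\|V\|$. You instead sandwich the resolvent between the two half-potentials and bound the Birman--Schwinger operator uniformly over \emph{all} $\alpha\in\CC\setminus[0,\infty)$, using the observation (the paper's Corollary) that the exponents $1/2-1/q$ and $n(1/q-1/2)-1$ are nonpositive, so the high-energy factors are at most $1$. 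The two estimates are dual formulations of the same inequality: your bound on $\|K(\alpha)\|_{L^2\to L^2}$ is exactly the paper's chain of H\"older and Sobolev inequalities read off the eigenfunction. Your packaging has two small advantages: it does not presuppose Theorems \ref{thm : short range} and \ref{thm : long range} (whose intermediate bound $|\lambda|<Cc^{(\gamma+n/2)/\gamma}$ degenerates at $\gamma=0$, a case your argument covers without fuss), and it avoids the mismatch between ``$|\lambda|$ bounded by some constant $C$'' and the hypothesis $|\alpha|<1$ of the low-energy estimate actually invoked in the paper.

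Two loose ends to tighten. First, the ``bootstrap'' you defer is unnecessary: Proposition \ref{prop : m-sectorial} already places $\psi$ in $H^1(\HH^n)$, so Sobolev embedding gives $\psi\in L^r$ for $2\le r\le 2n/(n-2)$; since your choice of $q$ satisfies $q'\le 2n/(n-2)$, H\"older immediately yields $V_2\psi\in L^2$ and the Birman--Schwinger equation is legitimate in one step. Second, from $V_2\psi=0$ you should still conclude $\psi=0$: off the spectrum this is immediate, since then $(L-\alpha)\psi=-V\psi=0$ and $L-\alpha$ is injective on $L^2$; on the spectrum it uses that $L$ itself has no embedded $L^2$-eigenvalues on $\HH^n$ (its spectrum is purely absolutely continuous), which is the same fact that underlies the paper's $\psi_\epsilon$ regularisation.
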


The significant improvement in the long range case also results from the better Sobolev estimates on hyperbolic space. Due to lack of uniform Sobolev estimates for $2(n + 1)/(n + 3) < p < 2$ on $\mathbb{R}^n$, one has to interpolate the Stein-Tomas estimates on $\mathbb{R}^n$ with the crude $L^2$ estimates with the factor $d(\lambda)$ from the spectral theorem, which causes the presence of the factor of $d(\lambda)$ in the eigenvalue bounds on Euclidean space in the long range case. In contrast,  we have uniform Sobolev inequalities on $\mathbb{H}^n$ without the factor $d(\lambda)$ for $2(n + 1)/(n + 3) < p < 2$ on $\mathbb{H}^n$. Consequently, we have the better eigenvalue bounds on $\mathbb{H}^n$ in the long range case.

In summary, the following diagrams elucidate the relations between spectral measure estimates, Stein-Tomas estimates, Sobolev inequalities, and eigenvalue bounds.

$$\left.\begin{array}{ccc}\mbox{Stein-Tomas for $\frac{2(n + 1)}{n + 3} \leq p < 2$}&&\mbox{Stein-Tomas for $\frac{2n}{n + 2} \leq p \leq \frac{2(n + 1)}{n + 3}$}\\ \Uparrow&&\Uparrow\\
\mbox{Spectral measure estimates}&&\mbox{Spectral measure estimates}\\ \Downarrow&&\Downarrow\\ \mbox{Sobolev for $\frac{2(n + 1)}{n + 3} \leq p < 2$} &&  \mbox{Sobolev for $\frac{2n}{n + 2} \leq p \leq \frac{2(n + 1)}{n + 3}$}\\ \Downarrow&&\Downarrow\\ \mbox{Eigenvalue bounds for $\gamma \geq 1/2$} && \mbox{Eigenvalue bounds for $0 < \gamma < 1/2$}\end{array}\right.$$

The paper is organized as follows. We shall review the microlocal descriptions of the resolvent and spectral measure on (asymptotically) hyperbolic spaces in Section 2. We break the main theorem into a few propositions and then prove them in Section 3-5.  It is followed by the applications on eigenvalue bounds for Schr\"odinger operators with complex potentials.

The author is supported by EPSRC grant EP/R001898/1 and NSFC grant 11701094 while this work was in progress. The author would like to thank Zihua Guo, Andrew Hassell, and Katya Krupchyk for helpful discussions and comments. I am indebted to anonymous referees for their  insightful comments.

\section{Resolvent and spectral measure on hyperbolic spaces}

Consider the real hyperbolic space $\mathbb{H}^{n}$ and its Laplacian $\Delta$. There are several ways to view $\mathbb{H}^{n}$. We choose the Poincar\'e upper half plane model. Specifically,  $\mathbb{H}^{n}$ is the following Riemannian manifold, $$\bigg(\{(x, y) | x > 0, y \in \mathbb{R}^{n-1}\}, \frac{dx^2 + dy^2}{x^2} \bigg).$$ Then the Laplacian $\Delta$ reads $$ - (x\partial_x)^2 + (n-1) x \partial_x - \sum_{i = 1}^{n-1}(x\partial_{y_i})^2.$$ The spectrum of $\Delta$ consists only of the half line $[(n-1)^2/4, +\infty)$. So we denote $L = \Delta - (n-1)^2/4$ through out this paper for simplicity.

We are concerned about the resolvent. Specifically, the resolvent kernel of $(L - \lambda^2)^{-1}$, if $\lambda^2 \notin [0, \infty)$, reads \begin{equation}  \left\{\begin{array}{ll} \displaystyle \frac{C}{ \lambda} \bigg( \frac{1}{\sinh(\rho)} \frac{\partial}{\partial \rho}\bigg)^{(n-1)/2} e^{\pm \imath \lambda \rho}& \mbox{when $n$ is odd;}\\ \displaystyle C \int_\rho^\infty  e^{\pm \imath \lambda s} (\cosh(s) - \cosh(\rho))_+^{-(n-1)/2} \sinh(s) \,ds& \mbox{when $n$ is even,} \end{array}\right.\end{equation} where $\rho$ is the distance function. The resolvent $(L - \lambda^2)^{-1}$, as a map between some weighted $L^2$ spaces, extends to be meromorphic in the whole complex plane, with finite number of poles when $n$ is even. Therefore, its kernel is a  distribution  meromorphically extendible  in $\mathbb{C}$. In particular, the bottom of the spectrum, $\alpha = 0$, is neither an eigenvalue nor a resonance. Namely, it is analytic at this point.\footnote{This is also true on a broad class of asymptotically hyperbolic manifolds.}

Alternatively, Mazzeo-Melrose \cite{Mazzeo-Melrose} microlocally constructed the resolvent kernel on asymptotically hyperbolic spaces and described it as $$\text{Ker}(\Delta - \zeta(n - 1 - \zeta))^{-1} = R_{nd} + e^{\pm \imath \lambda \rho}R_{od}.$$ Here $R_{nd}$ is the kernel of a pseudodifferential operator of degree $-2$ supported near the diagonal, whilst $R_{od}$ is a smooth function supported away from the diagonal.

The analyticity at the bottom of the spectrum readily implies that (see \cite[Theorem 1.3]{Chen-Hassell2}) for sufficiently small $|\lambda|$,  \begin{equation}\label{eqn : low energy spectral measure}dE_{\sqrt{L}}(\lambda)(x, y, x', y') = (xx')^{(n-1)/2} \lambda ({(xx')}^{\imath \lambda } a(\lambda) - {(xx')}^{- \imath \lambda} a(-\lambda)),\end{equation} where $a \in C^\infty([-1, 1] \times \overline{\mathbb{H}^{n}} \times \overline{\mathbb{H}^{n}})$, $\overline{\mathbb{H}^n}$ is the compactification of $\mathbb{H}^n$,\footnote{From the viewpoint of asymptotically hyperbolic manifolds, it is more convenient to understand the compactification in the Poincar\'e disc model. We can map the upper half plane $\mathbb{R}^n_+$ to the unit ball $\mathbb{B}^n$ via the Cayley transform. The compactification $\overline{\mathbb{H}^{n}}$ we mean is just the closure $\overline{\mathbb{B}^{n}}$.} and $x$ is the boundary defining function of $\overline{\mathbb{H}^n}$.

To understand the uniform Sobolev inequalities, it is also important to understand the asymptotic behaviour of the resolvent for large spectral parameters. Following \cite{Mazzeo-Melrose},  Melrose-S\'{a} Barreto-Vasy \cite{Melrose-Sa Barreto-Vasy} constructed the following high energy resolvent on Cartan-Hadamard asymptotically hyperbolic manifolds.
\begin{theorem}\label{thm:semiclassical resolvent}The resolvent kernel $(L - \lambda^2)^{-1}$ is analytic in a neighbourhood of lower half plane $\{\Im \lambda \leq 0\}$ and takes the form $$\text{Ker}(L - \lambda^2)^{-1} = R_0 + e^{-\imath \lambda \rho}R_\infty, \quad \mbox{for $|\lambda| > 1$}$$ where $R_0$ is the kernel of a pseudodifferential operator of degree $-2$ supported near the diagonal, and $R_\infty$ is a smooth function pointwisely bounded by a multiple of $$\left\{ \begin{array}{cl}  \rho^{-(n-1)/2}(1 + |\lambda|)^{n/2-3/2}&\mbox{for $ \rho \leq C$}\\e^{-(n-1) \rho/2}(1 + |\lambda|)^{n/2-3/2}&\mbox{for $ \rho \geq C$.}\end{array}\right.$$\end{theorem}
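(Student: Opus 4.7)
The plan is to base the proof directly on the explicit resolvent kernel formulas for $(L - \lambda^2)^{-1}$ on $\HH^n$ displayed just above the theorem, separating near- and far-diagonal behavior by a $\rho$-cutoff; the general Cartan-Hadamard asymptotically hyperbolic case would instead invoke the full Mazzeo-Melrose 0-calculus semiclassical parametrix construction carried out by Melrose-S\'a Barreto-Vasy. Fix $\chi \in C^\infty([0,\infty))$ with $\chi \equiv 1$ on $[0, C/2]$ and $\chi \equiv 0$ on $[C, \infty)$ for the constant $C$ in the theorem, and decompose the kernel as $\chi(\rho)\, \mathrm{Ker}(L-\lambda^2)^{-1} + (1 - \chi(\rho))\, \mathrm{Ker}(L-\lambda^2)^{-1}$; the first summand will be identified with $R_0$ and the second, after factoring out $e^{-\imath\lambda\rho}$, with $R_\infty$. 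The analyticity in a neighborhood of $\{\Im\lambda \leq 0\}$ is already visible in the formulas: in odd dimensions the kernel is entire in $\lambda$ since the pole of $\lambda^{-1}$ at $\lambda = 0$ is removed by the simple zero of $(\sinh^{-1}\rho \, \partial_\rho)^{(n-1)/2} e^{\pm \imath \lambda \rho}$ there, while in even dimensions the integral is meromorphic with no poles in a neighborhood of the closed lower half plane.

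For $R_0$ I work semiclassically with $h = 1/|\lambda|$; the rescaled operator $h^2(L - \lambda^2)$ has principal symbol $|\xi|_g^2 - (h\lambda)^2$, elliptic off a compact characteristic set, and a standard semiclassical parametrix construction on a tubular neighborhood of the diagonal identifies $R_0$ as the kernel of a semiclassical pseudodifferential operator of order $-2$, uniformly in $\lambda$. For the $R_\infty$ bound in odd dimensions I expand $(\sinh^{-1}\rho \, \partial_\rho)^{(n-1)/2} e^{-\imath\lambda\rho}$ by Leibniz into a finite sum of terms $c_{j,k}\, \lambda^j\, g_{j,k}(\rho)\, e^{-\imath\lambda\rho}$ with $j + k = (n-1)/2$ and $g_{j,k}(\rho) = O(\sinh^{-(n-1)/2 - k}\rho \, \cosh^k\rho)$. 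The top-order-in-$\lambda$ term $(-\imath\lambda)^{(n-1)/2}/\sinh^{(n-1)/2}\rho$, after dividing by the outer $\lambda^{-1}$ prefactor and factoring out $e^{-\imath\lambda\rho}$, produces a contribution of size $|\lambda|^{n/2-3/2}\sinh^{-(n-1)/2}\rho$ to $R_\infty$, giving $O(|\lambda|^{n/2-3/2}\rho^{-(n-1)/2})$ for $\rho \leq C$ and $O(|\lambda|^{n/2-3/2}e^{-(n-1)\rho/2})$ for $\rho \geq C$ as claimed. Subleading terms carry strictly smaller powers of $\lambda$ with the same or better $\rho$-decay and so are absorbed into the same bound on $\{\rho \geq C/2\}$.

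For even $n$ I treat the integral representation of the kernel by repeated integration by parts in $s$, each step gaining a factor $\lambda^{-1}$ from the phase $e^{-\imath\lambda s}$ at the cost of a derivative of the amplitude $(\cosh s - \cosh\rho)_+^{-(n-1)/2}\sinh s$ (interpreted as an analytically continued Riesz distribution for $n \geq 4$); after enough steps the same exponent $|\lambda|^{n/2-3/2}$ emerges, with the dominant $\rho$-behavior coming from the boundary contribution at $s = \rho$ and the exponential decay at $s \to \infty$ controlled by $\sinh s \sim e^s/2$. The main obstacle in both parities is the uniform bookkeeping of the trade-off between powers of $\lambda$ and powers of $\sinh^{-1}\rho$ (or, in the even case, derivatives of the singular amplitude): one must verify that whenever a derivative lands on the algebraic factor rather than on the exponential phase, the corresponding loss of $\lambda$ is exactly compensated by stronger $\rho$-decay, uniformly throughout $\{\rho \geq C/2\}$ and in particular across the transition region near $\rho = C$. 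Given this, the near-diagonal conclusion and the analyticity claim are by comparison routine.
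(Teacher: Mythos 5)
You should first note that the paper does not prove Theorem~\ref{thm:semiclassical resolvent} at all: it is quoted from Melrose--S\'a Barreto--Vasy (with the pointwise bounds on $R_\infty$ as used in \cite{Chen-Hassell2}), so a direct verification from the explicit $\HH^n$ kernel formulas is a legitimately different and potentially more elementary route. Your treatment of the far-diagonal region and of analyticity is essentially right: in odd dimensions the Leibniz expansion of $\lambda^{-1}(\sinh^{-1}\rho\,\partial_\rho)^{(n-1)/2}e^{-\imath\lambda\rho}$ does produce the leading term $\lambda^{(n-3)/2}\sinh^{-(n-1)/2}\rho$, which matches the stated bound, and the zero of the derivative at $\lambda=0$ does cancel the $\lambda^{-1}$ prefactor.

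The genuine gap is in your identification of $R_0$. With a cutoff $\chi(\rho)$ at a \emph{fixed} scale $C$, the piece $\chi(\rho)\,\mathrm{Ker}(L-\lambda^2)^{-1}$ is \emph{not} a (semiclassical) pseudodifferential operator of order $-2$ uniformly in $\lambda$, and the elliptic parametrix argument you invoke does not apply: the rescaled symbol $|\xi|_g^2-(h\lambda)^2$ with $h=1/|\lambda|$ vanishes on the unit cosphere bundle, and on that characteristic set no elliptic construction is available. Concretely, for $n=5$ the exact kernel is $\bigl(\imath\sinh^{-3}\rho\cosh\rho-\lambda\sinh^{-2}\rho\bigr)e^{-\imath\lambda\rho}$; the second summand, restricted to $\lambda^{-1}\lesssim\rho\le C/2$, is an oscillatory (Lagrangian) contribution of size $\lambda\rho^{-2}$ at frequency $\lambda$, which is not conormal to the diagonal and cannot be absorbed into a pseudodifferential $R_0$. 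The decomposition in the theorem is calibrated precisely so that this oscillatory near-diagonal piece lives in $e^{-\imath\lambda\rho}R_\infty$ --- that is why the $R_\infty$ bound for $\rho\le C$ has the form $\rho^{-(n-1)/2}|\lambda|^{(n-3)/2}$ rather than being a statement only about bounded-away-from-diagonal behaviour --- while $R_0$ captures only the true conormal singularity, i.e.\ the regime $\rho\lambda\lesssim 1$ where the subleading terms $\lambda^{(n-1)/2-k-1}\rho^{-(n-1)/2-k}$ dominate and reassemble into the $\rho^{-(n-2)}$ pseudodifferential singularity. So the cutoff must be taken at scale comparable to $\lambda^{-1}$ (or, equivalently, implemented microlocally), the subleading terms must then be re-verified against the $R_\infty$ bound only on $\{\rho\gtrsim\lambda^{-1}\}$ where they do hold, and the conormal structure of the remaining piece must be established separately. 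A secondary, lesser issue: in even dimensions the amplitude $(\cosh s-\cosh\rho)_+^{-(n-1)/2}$ is not locally integrable for $n\ge 4$, so "integration by parts with a boundary term at $s=\rho$" is not yet a proof --- the regularized distributional pairing has to be set up before the $\lambda$-power counting can be carried out.
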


As is well-known in spectral theory, one can obtain the spectral measure of the operator $\sqrt{L}$ through the resolvent near the spectrum. Using Melrose-S\'{a} Barreto-Vasy's resolvent construction, the  author and Hassell gave the microlocal description of the spectral measure at high energies and proved the bounds for the spectral measure in Theorem \ref{thm:kernelbounds}.

It is also useful to have the heat kernel. Davies-Mandouvalous \cite{Davies-Mandouvalos} proved that the heat kernel $e^{-t L}$ is equivalent to \begin{equation}\label{eqn : heat kernel}t^{- n/2} e^{-(n-1)\rho/2 - \rho^2/(4t)} (1 + \rho + t)^{n/2 - 3/2} (1 + \rho),\end{equation} where $\rho$ is the distance function. This has been further generalized to asymptotically hyperbolic spaces by Hassell and the author \cite{Chen-Hassell-heat kernel}.





\section{Sobolev estimates away from the spectrum }

 We proceed from the simplest case.
\begin{lemma}
For $\beta \leq 0$, \begin{equation}\label{eqn : sobolev for negative}\|(L - \beta)^{-1}\|_{L^q(\mathbb{H}^{n}) \rightarrow L^{q'}(\mathbb{H}^{n})} < C, \quad\quad\mbox{with $2n/(n + 2) \leq q < 2$}.\end{equation}
\end{lemma}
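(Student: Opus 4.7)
The plan is to realize $(L-\beta)^{-1}$ as convolution on $\HH^n$ against a radial kernel $K$ and to bound the resulting convolution by splitting $K$ into a near-diagonal and an off-diagonal piece. Writing $\beta = -\sigma^2$ with $\sigma \geq 0$, the explicit formulas from Section 2 (and, for large $|\beta|$, the Mazzeo--Melrose / Melrose--S\'a Barreto--Vasy description) yield a kernel with $|K(\rho)| \lesssim \rho^{-(n-2)}$ as $\rho \to 0$ and $|K(\rho)| \lesssim (1+\rho)^N e^{-((n-1)/2 + \sigma)\rho}$ for some $N$ as $\rho \to \infty$, with constants uniform in $\beta \leq 0$. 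Pick a smooth cutoff and write $K = K_0 + K_\infty$ with $\supp K_0 \subset \{\rho \leq 2\}$ and $\supp K_\infty \subset \{\rho \geq 1\}$; it suffices to bound convolution against each piece separately.

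For the off-diagonal piece $K_\infty$ I would apply the Kunze--Stein inequality \eqref{KS}. Since $(\sinh \rho)^{n-1} \lesssim e^{(n-1)\rho}$ for $\rho \geq 1$ and $|K_\infty(\rho)|^{q'/2} \lesssim (1+\rho)^{Nq'/2}\, e^{-(n-1)q'\rho/4}$, the integrand on the right-hand side of \eqref{KS} is bounded by a polynomial in $\rho$ times $e^{(n-1)(2-q')\rho/4}$, which is integrable at infinity for every $q \in (1,2)$ because $q' > 2$. Hence $\|f \ast K_\infty\|_{L^{q'}(\HH^n)} \leq C \|f\|_{L^q(\HH^n)}$ uniformly in $\beta \leq 0$ throughout the claimed range of $q$. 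For the near-diagonal piece $K_0$, compact support in $\rho$ and the local equivalence of the hyperbolic and Euclidean metrics reduce matters to classical Riesz-potential estimates: $K_0 \in L^r(\HH^n)$ for every $r < n/(n-2)$ and $K_0 \in L^{n/(n-2),\infty}(\HH^n)$, so Young's convolution inequality on the unimodular group $SO_0(n,1)$ with $1/r = 1-(1/q - 1/q')$ yields $\|f \ast K_0\|_{L^{q'}} \leq C \|f\|_{L^q}$ for every $q \in (2n/(n+2), 2)$, while the endpoint $q = 2n/(n+2)$ is handled by the weak-type Young (equivalently Hardy--Littlewood--Sobolev) inequality applied to the weak-$L^{n/(n-2)}$ bound on $K_0$. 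Summing the two contributions proves the lemma.

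The main technical obstacle I anticipate is the endpoint $q = 2n/(n+2)$: the elementary Young inequality is insufficient for the Riesz-type singularity of $K_0$, so one must invoke the weak-type refinement (equivalently Hardy--Littlewood--Sobolev) there. The rest of the argument is a direct consequence of \eqref{KS} together with the sharp exponential decay of the resolvent kernel for $\beta \leq 0$, which, notably, is strong enough to survive taking the $q'/2$-power that appears in the Kunze--Stein weight.
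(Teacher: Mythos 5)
Your proposal is correct and follows essentially the same route as the paper: pointwise bounds on the radial resolvent kernel (which the paper derives from the Davies--Mandouvalos heat kernel via the Laplace transform rather than from the explicit resolvent formulas, but the resulting bounds --- $\rho^{2-n}$ near the diagonal, $e^{-(n-1)\rho/2}$ times a harmless factor at infinity, uniformly in $\beta\le 0$ --- agree), then the Kunze--Stein inequality \eqref{KS} for the off-diagonal part and a fractional-integration argument for the diagonal singularity. In particular, your observation that the endpoint $q=2n/(n+2)$ requires the weak-type (Hardy--Littlewood--Sobolev) refinement on the near-diagonal piece is exactly how the paper closes that case.
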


\begin{proof} We firstly assume $\beta < 0$ and invoke the heat kernel on $\mathbb{H}^{n}$ via the Laplace transform
$$(L - \beta)^{-1} = \mathcal{L}(e^{- \cdot L})(- \beta) = \int_0^\infty e^{- tL} e^{\beta t}\,dt.$$

In order to simplify the heat kernel \eqref{eqn : heat kernel} a bit, we break into the following three cases:
\begin{equation}\label{eqn : heat kernel bounds} \text{Ker} \,e^{-tL} \approx \left\{ \begin{array}{ll} t^{- n/2} e^{-(n-1)\rho/2 - \rho^2/(4t)} (1 + \rho )^{(n-1)/2}  & t < C \\t^{- n/2} e^{-(n-1)\rho/2 - \rho^2/(4t)} (1 + \rho )^{(n-1)/2}  &  \rho > t > C\\
t^{- 3/2} e^{-(n-1)\rho/2 - \rho^2/(4t)} (1 + \rho) & t > \rho, t > C.\end{array} \right.\end{equation} Using this, together with the integral formula $$\int_0^\infty t^{\nu - 1} e^{- \xi/t - \zeta t} \,dt = 2\bigg(\frac{\xi}{\zeta}\bigg)^{\nu/2} K_\nu(2\sqrt{\xi \zeta}), \quad \Re \xi >0,  \Re \zeta > 0$$ where $K_\nu$ is the modified Bessel functions,  we obtain that
$|\text{Ker}\,(L - \beta)^{-1}|$ with $\beta < 0$ is bounded from above by a multiple of
\begin{multline*} e^{-(n-1)\rho/2}\Big( (1 + \rho )^{(n-1)/2} \rho^{- (n - 2)/2}(-\beta)^{(n-2)/4} K_{(n - 2)/2}(\rho\sqrt{-\beta}) \\+ (1 + \rho)^{1/2}(-\beta)^{1/4} K_{1/2}(\rho\sqrt{-\beta})\Big).\end{multline*}
As is well-known, the modified Bessel function obeys the following asymptotic behaviours: $$K_\nu(h) \approx \left\{ \begin{array}{ll}C_\nu h^{-\nu}, & 0 < h < 1;  \\ C_\nu h^{-1/2} e^{-h}, & h > 1, \end{array}\right.$$ provided $\nu > 0$. Therefore, this yields that $$ |\text{Ker}\,(L -\beta)^{-1}| \leq \left\{\begin{array}{ll}C e^{-(n-1)\rho/2-\rho\sqrt{-\beta}}\Big(  (-\beta)^{(n-3)/4}  + 1\Big), &\rho\sqrt{-\beta} > 1,\\
C e^{-(n-1)\rho/2}(1 + \rho)^{(n-1)/2}\rho^{- (n - 2)}, & \rho\sqrt{-\beta} < 1.\end{array}\right.$$
 We further break up these bounds into six cases and consequently have the following upper bounds for the resolvent kernel $$ |\text{Ker}\,(L - \beta)^{-1}| \leq \left\{\begin{array}{ll}
C e^{-(n-1)\rho/2}   , &\mbox{Case (i)}: \rho > 1, \sqrt{-\beta} > 1,\\
C e^{-(n-1)\rho/2
}   , &\mbox{Case (ii)}: \rho > 1, \sqrt{-\beta} < 1, \rho \sqrt{-\beta} > 1,\\
C e^{-\rho\sqrt{-\beta}}  \langle -\beta\rangle^{(n-3)/4}  , &\mbox{Case (iii)}: \rho < 1, \sqrt{-\beta} > 1, \rho \sqrt{-\beta} > 1,\\
C \rho^{2-n}, &\mbox{Case (iv)}:  \rho < 1, \sqrt{-\beta} > 1, \rho \sqrt{-\beta} < 1,\\
C e^{-(n-1)\rho/2} \rho^{(3-n)/2}, &\mbox{Case (v)}:  \rho > 1, \sqrt{-\beta} < 1, \rho \sqrt{-\beta} < 1,\\
C \rho^{2-n}, &\mbox{Case (vi)}:  \rho < 1, \sqrt{-\beta} < 1.\end{array}\right.$$

Applying the Kunze-Stein phenomenon \eqref{KS} gives for all $\beta < 0$, \begin{equation}\label{eqn : constant resolvent bound away from spectrum}\|(L - \beta)^{-1}\|_{L^q(\mathbb{H}^{n}) \rightarrow L^{q'}(\mathbb{H}^{n})} < C, \quad\quad\mbox{with $2n/(n + 2) < q < 2$}.\end{equation} In fact, it suffices to check if the integral \begin{equation}\label{eqn : integral in KS}\int_0^\infty (\sinh \rho)^{n - 1} (1 + \rho) e^{- (n - 1)\rho/2} |\text{Ker} (L - \beta)^{-1}|^{1 + \varepsilon} \,d\rho\end{equation} is convergent uniformly in $\beta < 0$, where $q' = 2 + 2\varepsilon$ and $0 < \varepsilon < 2/(n - 2)$.

For Case (i) and Case (ii), the integral \eqref{eqn : integral in KS} reduces to $$\int_1^\infty  e^{(n - 1)\rho}(1 + \rho) e^{-(n - 1)\rho/2} \Big|e^{- (n - 1)\rho/2}\Big|^{1 + \varepsilon} \,d\rho,$$ which is obviously convergent.

For Case (iv) and Case (vi), we have to estimate the fractional integral $$\int_0^1  \rho^{n - 1}\rho^{(2 - n) + \varepsilon(2 - n)} \,d\rho,$$ which is convergent as long as $\varepsilon < 2/(n - 2)$.

For Case (iii), the integral is bounded above by $$\int_{(-\beta)^{-1/2}}^1 \rho^{n - 1}   \Big|e^{-\rho\sqrt{-\beta}} (\sqrt{-\beta})^{(n-3)/2}\Big|^{1 + \varepsilon} \,d\rho.$$ The range of $\varepsilon$ guarantees the power of $\rho$ is bigger than that of $\sqrt{-\beta}$. Since $\rho < 1$ and $$e^{-\rho\sqrt{-\beta}} (\rho\sqrt{-\beta})^{(n-3)(1 + \varepsilon)/2} < C,$$ we have the uniform convergence in $\beta$.

For Case (v), we end up with $$\int_1^\infty  e^{(n - 1)\rho}(1 + \rho) e^{-(n - 1)\rho/2} \Big|e^{- (n - 1)\rho/2}\rho^{(3-n)/2}\Big|^{1 + \varepsilon} \,d\rho.$$ We will have $e^{-(n-1)\varepsilon\rho/2}$ left in the integrand after trivial cancellations. This factor kills any polynomial growth in $\rho$ and makes the integral uniformly convergent.

This inequality also holds at the endpoint $q = 2n/(n + 2)$, which corresponds to $\varepsilon = 2/(n - 2)$. In cases (i) (ii) (iii) (v), the proof above works verbatim. But it fails in cases (iv) (vi) when $\rho$ is small. To remedy this, one can apply the Hardy-Littlewood-Sobolev inequality to the diagonal part of the resolvent kernel, since the diagonal part is not affected by the exponential volume growth on hyperbolic spaces.

Noting the operator norm bound is independent of $\beta$, we have the inequality \eqref{eqn : sobolev for negative} in addition at $\beta = 0$. The proof is now complete.\end{proof}

This is further generalized to spectral parameters away from the positive real axis.

\begin{lemma}

For any $\alpha \in \mathbb{C}$ with $|\arg(\alpha)| \geq \theta > 0$, we have \begin{equation}\label{eqn : sobolev away from spectrum with constant bound}\|(L - \alpha)^{-1}\|_{L^q(\mathbb{H}^{n}) \rightarrow L^{q'}(\mathbb{H}^{n})} < C, \quad\quad\mbox{with $2n/(n + 2) \leq q < 2$}.\end{equation}

 \end{lemma}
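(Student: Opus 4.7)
The plan is to extend the argument of the previous lemma, which treated $\beta\leq 0$, to arbitrary complex $\alpha$ in the sector $\{|\arg\alpha|\geq \theta\}$ by a contour deformation in the heat-kernel representation of the resolvent. For such $\alpha$, I would pick an angle $\psi=\psi(\alpha)\in(-\pi/2,\pi/2)$ with $\Re(\alpha e^{\imath\psi})\leq -c(\theta)|\alpha|$ and $\cos\psi\geq c(\theta)>0$; this is possible precisely because $\arg\alpha$ stays away from $0$. Since the heat semigroup $e^{-zL}$ is holomorphic on $\{\Re z>0\}$ with $\|e^{-zL}\|_{L^2\to L^2}\leq 1$ throughout, Cauchy's theorem yields
$$(L-\alpha)^{-1} = e^{\imath\psi}\int_0^{\infty} e^{-re^{\imath\psi}L}\,e^{\alpha r e^{\imath\psi}}\,dr,$$
realizing $(L-\alpha)^{-1}$ as a Laplace transform against an effective spectral parameter of modulus $\sim|\alpha|$ and negative real part.

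Next, I would verify that the real-time pointwise bounds \eqref{eqn : heat kernel bounds} persist under analytic continuation $t\mapsto re^{\imath\psi}$: the Gaussian factor $e^{-\rho^2/(4t)}$ is replaced by a factor of magnitude $e^{-c_\psi \rho^2/r}$, while the prefactors keep the same form with $t$ replaced by $r$. This is a standard consequence of Davies--Gaffney off-diagonal Gaussian estimates combined with the Phragmen--Lindelof principle. Substituting these complex-time bounds into the deformed Laplace integral and evaluating the $r$-integral through the modified Bessel function identity of the previous lemma produces the same six-case pointwise estimates for $|\text{Ker}(L-\alpha)^{-1}|$, with the role of $\sqrt{-\beta}>0$ now played by a quantity comparable to $\sqrt{|\alpha|}$, uniformly in $\alpha$ over the sector. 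The Kunze--Stein estimate \eqref{KS} and the case-by-case convergence check of the integral \eqref{eqn : integral in KS} from the previous lemma then apply verbatim, giving \eqref{eqn : sobolev away from spectrum with constant bound} for $2n/(n+2)<q<2$; the endpoint $q=2n/(n+2)$ is handled as before by isolating the diagonal piece of the kernel and invoking Hardy--Littlewood--Sobolev.

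The main obstacle is the complex-time heat-kernel estimate in the second step, because \eqref{eqn : heat kernel} mixes a Gaussian-in-$\rho^2/t$ factor with a non-Euclidean exponential-in-$\rho$ factor, and both must be preserved under analytic continuation in $t$. An alternative that sidesteps this is to work directly from the resolvent structure recalled in Section 2: write $\alpha=\lambda^2$ with $\Im\lambda\geq \sin(\theta/2)|\lambda|$, and apply Theorem \ref{thm:semiclassical resolvent} for $|\lambda|\geq 1$, together with the low-energy formula \eqref{eqn : low energy spectral measure} and the analyticity of the resolvent at $\alpha=0$ for $|\lambda|\leq 1$. In either regime the resolvent kernel carries a spatial decay factor $e^{-\Im\lambda\cdot\rho}$ that is at least as strong as the one exploited in the previous lemma, and feeding it into Kunze--Stein \eqref{KS} yields the uniform estimate directly.
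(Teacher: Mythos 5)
Your proposal takes a genuinely different, and much heavier, route than the paper. The paper's own proof is a short functional-analytic reduction to the previous lemma: since $(L-\beta)^{-1/2}$ is self-adjoint, the bound \eqref{eqn : sobolev for negative} is equivalent (by a $TT^*$/duality argument) to
$\|(L-\beta)^{-1/2}\|_{L^q\to L^2}\leq C$ and $\|(L-\beta)^{-1/2}\|_{L^2\to L^{q'}}\leq C$ for $\beta\leq 0$. One then factors
\begin{equation*}
(L-\alpha)^{-1}=(L-\beta)^{-1/2}\,\Bigl[(L-\beta)(L-\alpha)^{-1}\Bigr]\,(L-\beta)^{-1/2},
\end{equation*}
and the middle factor is bounded on $L^2$ by the spectral theorem: $\sup_{\mu>0}|\mu-\beta|/|\mu-\alpha|\leq 1$ when $\Re\alpha\leq 0$, and $\leq C(1+|\Re\alpha/\Im\alpha|)\leq C_\theta$ when $\Re\alpha>0$, using $|\arg\alpha|\geq\theta$. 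No new kernel estimates are needed; the sector condition enters only through this elementary supremum. Your contour-deformation scheme, by contrast, re-derives pointwise kernel bounds for every $\alpha$ in the sector.

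The concrete gap in your primary route is the second step: you assert that the real-time bounds \eqref{eqn : heat kernel bounds} persist under $t\mapsto re^{\imath\psi}$ "with the prefactors keeping the same form" as "a standard consequence of Davies--Gaffney off-diagonal estimates combined with Phragmen--Lindelof." That machinery yields $L^2$ off-diagonal Gaussian bounds, or pointwise Gaussian bounds with degraded constants, but it does not automatically preserve the specific non-Euclidean structure of \eqref{eqn : heat kernel} --- the factor $e^{-(n-1)\rho/2}$ and the polynomial corrections $(1+\rho+t)^{n/2-3/2}(1+\rho)$ --- which is exactly what the six-case analysis of the previous lemma relies on. On $\mathbb{H}^n$ one could salvage this from the explicit heat kernel formulas, but that is a separate argument that must actually be carried out, not a citation to general theory. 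Your fallback via Theorem \ref{thm:semiclassical resolvent} and \eqref{eqn : low energy spectral measure} is more robust (note the sign convention: you need the branch with $\Im\lambda\leq 0$ so that $|e^{-\imath\lambda\rho}|=e^{-|\Im\lambda|\rho}$ decays, and the decisive spatial decay is the $e^{-(n-1)\rho/2}$ in $R_\infty$ rather than the oscillatory factor alone), but it still requires a uniform-in-$\lambda$ case check analogous to \eqref{eqn : integral in KS} and a separate treatment of the low-energy off-spectrum regime; all of this is rendered unnecessary by the paper's two-line factorization.
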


\begin{proof}

 By duality, \eqref{eqn : sobolev for negative} implies that for $\beta < 0$\begin{eqnarray*}\|(L - \beta)^{-1/2}\|_{L^{q}(\mathbb{H}^{n}) \rightarrow L^{2}(\mathbb{H}^{n})} &<& C,\\ \|(L - \beta)^{-1/2}\|_{L^{2}(\mathbb{H}^{n}) \rightarrow L^{q'}(\mathbb{H}^{n})} &<& C.\end{eqnarray*}
  For any complex spectral parameter  $\alpha = \beta + \imath \gamma$ with  $\gamma \neq 0$,  it follows that $$\|(L - \alpha)^{-1}\|_{L^q(\mathbb{H}^{n}) \rightarrow L^{q'}(\mathbb{H}^{n})}  < C \|(L - \beta)(L - \alpha)^{-1}\|_{L^2(\mathbb{H}^{n}) \rightarrow L^{2}(\mathbb{H}^{n})}.$$ The right hand side is indeed bounded from above $$\|(L - \beta)(L - \alpha)^{-1}\|_{L^2(\mathbb{H}^{n}) \rightarrow L^{2}(\mathbb{H}^{n})} = \sup_{\lambda^2 > 0} \frac{|\lambda^2 - \beta|}{|\lambda^2 - \beta - \imath \gamma|} \leq 1.$$ Similarly, for $\alpha = - \beta + \imath \gamma$ with $|\arg \alpha| \geq \theta > 0$, we have $$\|(L - \beta)(L - \alpha)^{-1}\|_{L^2(\mathbb{H}^{n}) \rightarrow L^{2}(\mathbb{H}^{n})} = \sup_{\lambda^2 > 0} \frac{|\lambda^2 - \beta|}{|\lambda^2 + \beta - \imath \gamma|} \leq C (1 + \bigg|\frac{\beta}{\gamma}\bigg| ) \leq C.$$
\end{proof}

Combining this with $L^2$ bound of the resolvent,  we can prove \begin{proposition}\label{prop : away spectrum}If $\alpha \in \mathbb{C}$ is  away from an open sector $\{\alpha : |\arg(\alpha)| < \theta\}$ containing the positive real axis, we have \begin{equation}\label{eqn : sobolev away from spectrum}\|(L - \alpha)^{-1}\|_{L^{q}(\mathbb{H}^{n}) \rightarrow L^{q'}(\mathbb{H}^{n}) } \leq C |\alpha|^{n(1/q - 1/2) - 1}, \end{equation}for $2n/(n + 2) \leq q < 2$.
	
	 Alternatively, for any $\alpha \in \mathbb{C}$, we have \begin{equation}\label{eqn : sobolev away from spectrum with cutoff}\|(1 - \phi(L/|\alpha|))(L - \alpha)^{-1}\|_{L^{q}(\mathbb{H}^{n}) \rightarrow L^{q'}(\mathbb{H}^{n}) } \leq C |\alpha|^{n(1/q - 1/2) - 1},\end{equation} provided that $\phi \in C_c^\infty(\mathbb{R})$ is a cut-off function supported around $1$ and equal to $1$ in a small compact neighbourhood of $1$.\end{proposition}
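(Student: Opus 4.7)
The plan is to obtain both \eqref{eqn : sobolev away from spectrum} and \eqref{eqn : sobolev away from spectrum with cutoff} by Riesz--Thorin interpolation between the endpoint $L^{q_0}\to L^{q_0'}$ bound provided by the previous lemma, with $q_0 := 2n/(n+2)$, and the $L^2 \to L^2$ bound supplied by the spectral theorem. Specifically, \eqref{eqn : sobolev away from spectrum with constant bound} gives $\|(L-\alpha)^{-1}\|_{L^{q_0}\to L^{q_0'}} \leq C$ uniformly for $\alpha$ outside the sector $\{|\arg\alpha|<\theta\}$, while the self-adjointness of $L$ yields
\begin{equation*}
\|(L-\alpha)^{-1}\|_{L^2\to L^2} \leq \operatorname{dist}(\alpha,[0,\infty))^{-1} \leq C_\theta|\alpha|^{-1}
\end{equation*}
for such $\alpha$.

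For the first claim I would interpolate these two endpoints at parameter $\vartheta\in[0,1)$. Setting $1/p_\vartheta := (1-\vartheta)/q_0 + \vartheta/2$, a direct computation shows $1/p_\vartheta' = (1-\vartheta)/q_0' + \vartheta/2$, so the interpolation lands in the correct dual pair and produces
\begin{equation*}
\|(L-\alpha)^{-1}\|_{L^{p_\vartheta}\to L^{p_\vartheta'}} \leq C^{1-\vartheta}(C_\theta|\alpha|^{-1})^{\vartheta} = C|\alpha|^{-\vartheta}.
\end{equation*}
Since $1/q_0 - 1/2 = 1/n$, one has $\vartheta = 1 - n(1/p_\vartheta - 1/2)$, hence $-\vartheta = n(1/p_\vartheta - 1/2) - 1$, exactly matching the target exponent; letting $\vartheta$ sweep $[0,1)$ covers the whole range $p_\vartheta \in [q_0,2)$. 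For $|\alpha|\leq 1$ the exponent $n(1/p-1/2)-1$ is non-positive, so the desired bound is already implied by \eqref{eqn : sobolev away from spectrum with constant bound} alone.

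For the second claim I would run the same interpolation on the truncated operator $T_\alpha := (1-\phi(L/|\alpha|))(L-\alpha)^{-1}$ for arbitrary $\alpha\in\CC$. On the support of $1-\phi(\cdot/|\alpha|)$ the symbol of $T_\alpha$ satisfies $|\lambda-\alpha|\geq c|\alpha|$ uniformly, so $\|T_\alpha\|_{L^2\to L^2}\leq C|\alpha|^{-1}$ holds for every $\alpha$, including those on the positive real axis. For the matching $L^{q_0}\to L^{q_0'}$ endpoint I would factor
\begin{equation*}
T_\alpha = m_\alpha(L)\cdot (L - i|\alpha|)^{-1}, \qquad m_\alpha(\lambda) := \frac{(1-\phi(\lambda/|\alpha|))(\lambda - i|\alpha|)}{\lambda-\alpha},
\end{equation*}
using the previous lemma applied to $\alpha_0 = i|\alpha|$ to bound $(L-i|\alpha|)^{-1}$ on $L^{q_0}\to L^{q_0'}$ and the uniform $L^{q_0'}$-boundedness of $m_\alpha(L)$.

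The principal obstacle is the uniform $L^{q_0'}$-boundedness of the auxiliary multiplier $m_\alpha(L)$, since a merely pointwise bound on the symbol does not give $L^p$-boundedness for $p\neq 2$ on $\HH^n$. The symbol $m_\alpha$ is smooth with scale-invariant derivative bounds uniform in $\alpha$, so a Mikhlin--H\"ormander type spectral multiplier theorem on $\HH^n$ can be invoked. A more concrete alternative, consistent with the rest of the paper, is to estimate the kernel of $T_\alpha$ directly via the pointwise spectral measure bounds of Theorem \ref{thm:kernelbounds} and then apply the Kunze--Stein inequality \eqref{KS} in the manner of the proof of the previous lemma.
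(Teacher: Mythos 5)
Your proof of \eqref{eqn : sobolev away from spectrum} is correct and is essentially the paper's own argument: Riesz--Thorin interpolation between the uniform $L^{q_0}\to L^{q_0'}$ bound of \eqref{eqn : sobolev away from spectrum with constant bound} at $q_0=2n/(n+2)$ and the spectral-theorem bound $\|(L-\alpha)^{-1}\|_{L^2\to L^2}\le \operatorname{dist}(\alpha,[0,\infty))^{-1}\le C_\theta|\alpha|^{-1}$, with exactly the exponent arithmetic you carried out.

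For \eqref{eqn : sobolev away from spectrum with cutoff}, however, your primary route has a genuine gap, and it is not the kind that a multiplier theorem repairs. The factorization $T_\alpha=m_\alpha(L)\,(L-i|\alpha|)^{-1}$ forces you to bound $m_\alpha(L)$ on $L^{q_0'}(\mathbb{H}^n)$ with $q_0'\neq 2$. On hyperbolic space the $L^p$ functional calculus of $\sqrt{L}$ is governed by the Clerc--Stein necessary condition: a function $m(\sqrt{L})$ bounded on $L^p(\mathbb{H}^n)$, $p\neq 2$, must extend holomorphically to the strip $\{|\Im\mu|<(n-1)|1/p-1/2|\}$. Your symbol $m_\alpha(\mu^2)$ contains the factor $1-\phi(\mu^2/|\alpha|)$ with $\phi\in C_c^\infty$ merely smooth, so it is not holomorphic in any strip (and, where $1-\phi\equiv 1$, the factor $(\mu^2-i|\alpha|)/(\mu^2-\alpha)$ has poles $\pm\sqrt{\alpha}$ inside the strip when $\alpha$ approaches the positive axis, which the statement allows). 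Hence $m_\alpha(L)$ is in general \emph{not} bounded on $L^{q_0'}(\mathbb{H}^n)$; scale-invariant derivative bounds on the symbol cannot rescue this. The paper sidesteps the issue by sandwiching symmetrically: it writes
\begin{equation*}
T_\alpha=(L-\beta)^{-1/2}\Bigl[(1-\phi(L/|\alpha|))(L-\beta)(L-\alpha)^{-1}\Bigr](L-\beta)^{-1/2}
\end{equation*}
for a suitable $\beta\le 0$ (e.g. $\beta=-|\alpha|$), uses the duality consequence of the previous lemma that $(L-\beta)^{-1/2}$ maps $L^{q_0}\to L^2$ and $L^2\to L^{q_0'}$, and then only needs the middle factor to be bounded on $L^2$ --- i.e. the supremum of $|1-\phi(\lambda^2/|\alpha|)|\,|\lambda^2-\beta|/|\lambda^2-\alpha|$, which is finite precisely because the cutoff keeps $\lambda^2$ away from $|\alpha|$. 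That is the same quantity you already control, placed where an $L^2$ bound suffices. Your fallback (direct kernel bounds plus Kunze--Stein) is plausible but not an argument as written: the pointwise spectral measure bounds of Theorem \ref{thm:kernelbounds} only cover $\lambda\ge 1$, and the near-diagonal pseudodifferential part of the kernel of $T_\alpha$ would need Hardy--Littlewood--Sobolev rather than Kunze--Stein at the endpoint $q_0=2n/(n+2)$. Replace your factorization by the symmetric one and the rest of your interpolation argument closes the proof.
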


We remark that the insertion of the cut-off function $(1 - \phi(\cdot/|\alpha|))$ eliminates the restriction of the spectral parameter $\alpha$ but  does not change the off-spectrum nature. The constraint of $\alpha$ being away from the spectrum in \eqref{eqn : sobolev away from spectrum} is moved to the cut-off function in \eqref{eqn : sobolev away from spectrum with cutoff}. The support of the cut-off function is determined by the sector $\{\alpha : |\arg(\alpha)| < \theta\}$.

\begin{proof}
 First of all, spectral theorem yields that for any $\alpha \in \mathbb{C}$ with  $|\arg(\alpha)| \geq \theta > 0$
$$\|(L - \alpha)^{-1}\|_{L^{2}(\mathbb{H}^{n}) \rightarrow L^{2}(\mathbb{H}^{n})} \leq  \sup_{\lambda^2 > 0} C / |\lambda^2 - \alpha| \leq C/|\alpha|.$$

Then \eqref{eqn : sobolev away from spectrum} is obtained by applying Riesz-Thorin interpolation to this $L^2$ bound and \eqref{eqn : sobolev away from spectrum with constant bound}.

Noting that the operator $1 - \phi(L/|\alpha|)$ is commutative with the function of $L$, we can use the same argument for \eqref{eqn : sobolev away from spectrum with constant bound} to prove, for any $\alpha \in \mathbb{C}$, \begin{eqnarray*}\lefteqn{\|(1 - \phi(L/|\alpha|))(L - \alpha)^{-1}\|_{L^q(\mathbb{H}^{n}) \rightarrow L^{q'}(\mathbb{H}^{n})}}\\
&<& C\|(1 - \phi(L/|\alpha|))(L + |\alpha|)(L - \alpha)^{-1}\|_{L^2(\mathbb{H}^{n}) \rightarrow L^{2}(\mathbb{H}^{n})}\\
&<& C \sup_{\lambda^2 > 0}   \frac{|1 - \phi(\lambda^2/|\alpha|)|  (\lambda^2 + |\alpha|)}{|\lambda^2 - \alpha|}\\
&<& C \sup_{\mu \geq 0}    \frac{|1 - \phi(\mu)| (\mu + 1)}{|\mu - e^{\imath \arg \alpha}|}\\
&<& C. \end{eqnarray*} The last inequality used the fact that $|\mu - e^{\imath \arg \alpha}|$ is bounded uniformly in $\alpha$ from below on the support of $1 - \phi(\mu)$.

Finally, the interpolation with $$\|(1 - \phi(L/|\alpha|))(L - \alpha)^{-1}\|_{L^2(\mathbb{H}^{n}) \rightarrow L^2(\mathbb{H}^{n})} \leq C/|\alpha|$$ yields \eqref{eqn : sobolev away from spectrum with cutoff}.

\end{proof}

\section{Sobolev estimates near the spectrum}

It remains to consider the case when $\alpha$ is near the spectrum, say $ |\arg(\alpha)| < \theta < \pi/4$.  By choosing a cut-off function $\phi$ supported around $1$ as above, we consider $\phi(L/|\alpha|)(L - \alpha)^{-1}$ instead. This spectral cut-off does not change the result but brings us some convenience to use the spectral measure estimates.

On the one hand,  one can prove the following low energy results \begin{proposition}\label{prop : low near spectrum} Suppose $\alpha \in \mathbb{C} \setminus \mathbb{R}$ with $|\alpha| < 1$ and $|\arg(\alpha)| < \theta < \pi/4$. For $2n/(n + 2) \leq q < 2$, $$ \|\phi(L/|\alpha|)(L - \alpha)^{-1}\|_{L^{q}(\mathbb{H}^{n}) \rightarrow L^{q'}(\mathbb{H}^{n})} \leq C.$$\end{proposition}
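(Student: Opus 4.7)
The plan is to apply the Kunze-Stein convolution inequality \eqref{KS} directly to the radial integral kernel $K_\alpha(\rho)$ of $\phi(L/|\alpha|)(L - \alpha)^{-1}$, after establishing a uniform pointwise bound of the form
$$|K_\alpha(\rho)| \leq C(1+\rho)\, e^{-(n-1)\rho/2}.$$

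First, I would express the kernel via spectral calculus,
$$K_\alpha(\rho) = \int_0^\infty \phi(\mu^2/|\alpha|)(\mu^2 - \alpha)^{-1}\, dE_{\sqrt L}(\mu)(\rho),$$
and note that since $\phi$ is supported near $1$ and $|\alpha| < 1$, the integration is confined to a compact subinterval of $(0, \sqrt 2)$, so only the low-energy form \eqref{eqn : low energy spectral measure} of the spectral measure enters. By isotropy (equivalently the spherical Plancherel formula), the radial profile of $dE_{\sqrt L}(\mu)$ has the form $g(\mu, \rho)\, d\mu$ with $g$ smooth in $\mu$ and $|g(\mu, \rho)| \leq C\mu(1+\rho) e^{-(n-1)\rho/2}$ uniformly for $\mu \in [0, 2]$; the polynomial factor in $\mu$ comes from the explicit $\lambda$ in \eqref{eqn : low energy spectral measure} and the spatial factor from the standard Harish-Chandra bound on the spherical function.

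Rescaling $\mu = |\alpha|^{1/2}\nu$ normalizes the cut-off onto a fixed compact set independent of $\alpha$ and gives
$$K_\alpha(\rho) = \int \phi(\nu^2)\,\tilde g(|\alpha|^{1/2}\nu, \rho)\, \frac{d\nu}{\nu^2 - \alpha/|\alpha|},$$
where $\tilde g(\mu, \rho) = g(\mu, \rho)/\mu$ is smooth in $\mu$ and bounded by $C(1+\rho) e^{-(n-1)\rho/2}$ uniformly for $\mu$ in a compact set. The main obstacle is to bound this $\nu$-integral uniformly as $\arg\alpha \to 0$: a naive absolute-value estimate yields a logarithmic blow-up in $1/|\arg\alpha|$, which would be inadmissible. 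One must instead exploit the cancellation encoded in the Cauchy integral. Concretely, for smooth compactly supported $\psi$ the integral $\int \psi(\nu)(\nu^2 - \zeta)^{-1}\, d\nu$ is uniformly bounded for $\zeta \in \CC$; this follows by integration by parts using $(\nu^2 - \zeta)^{-1} = \partial_\nu \log(\nu^2 - \zeta)/(2\nu)$ together with local integrability of $\log|\cdot|$. An equivalent route is to split $(\mu^2 - \alpha)^{-1}$ into its principal-value and delta-like parts; the $|\Im\alpha|$ prefactor of the latter exactly cancels the $1/|\Im\alpha|$ blow-up of its nearly-singular integrand, while the former is uniformly bounded by Hilbert-transform considerations.

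With the pointwise bound on $K_\alpha$ in hand, plugging into \eqref{KS} yields
$$\|\phi(L/|\alpha|)(L - \alpha)^{-1}\|_{L^q \to L^{q'}} \leq C \bigg( \int_0^\infty (\sinh\rho)^{n-1}(1+\rho)^{1+q'/2} e^{-(n-1)(1+q'/2)\rho/2}\, d\rho \bigg)^{2/q'}.$$
Since $(\sinh\rho)^{n-1} \sim e^{(n-1)\rho}$ at infinity, the net exponential rate in the integrand is $(n-1)\rho(1/2 - q'/4)$, which is strictly negative precisely when $q < 2$, so the integral converges at infinity. At the origin $(\sinh\rho)^{n-1} \sim \rho^{n-1}$ makes the integrand trivially integrable, since the spectral cut-off $\phi$ has already removed the diagonal singularity of the bare resolvent. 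The resulting constant is independent of $\alpha$ in the stated range $|\alpha| < 1$, $|\arg\alpha| < \theta < \pi/4$, finishing the proof.
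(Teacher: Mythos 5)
Your overall architecture coincides with the paper's: express the kernel through the low-energy spectral measure \eqref{eqn : low energy spectral measure}, obtain a pointwise bound of the form $e^{-(n-1)\rho/2}$ times something harmless, and feed it into the Kunze--Stein inequality \eqref{KS}, whose convergence for $q<2$ is exactly the computation you carry out at the end. Where you genuinely diverge is in the mechanism for extracting the uniform bound on the singular spectral integral as $\arg\alpha\to 0$. The paper splits $\lambda/(\lambda^2-\alpha)$ by partial fractions, translates $\lambda$ by $\pm\sqrt{\alpha}$ (or $\pm\Re\sqrt{\alpha}$), and recognizes the result as a convolution of a Schwartz function with $\mathrm{sgn}$ or with $e^{-|\epsilon|\cdot}H(\cdot)$, using that the Fourier transform of the compactly supported smooth amplitude is Schwartz. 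You instead work directly in the spectral variable, writing $(\nu-w)^{-1}=\partial_\nu\log(\nu-w)$ and integrating by parts (equivalently, the principal-value/delta decomposition). The two devices are interchangeable here and yield the same conclusion; yours is more elementary and self-contained, the paper's packages the $\rho$-dependence more transparently since $\log(xx')\sim-\rho$ enters only as the argument of a bounded function. Your treatment of the diagonal is also different and, if anything, cleaner: the paper falls back on the Mazzeo--Melrose pseudodifferential description to get $O(\rho^{2-n})$ (which at the endpoint $q=2n/(n+2)$ forces an extra Hardy--Littlewood--Sobolev step, as in the previous lemma), whereas the compact spectral support plus boundedness of the spectral measure kernel does give a bounded kernel near the diagonal, making the endpoint automatic.

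One small point to repair. Your integration-by-parts bound costs one $\nu$-derivative of the amplitude $\phi(\nu^2)\,\tilde g(|\alpha|^{1/2}\nu,\rho)$, and $\partial_\mu$ applied to the oscillatory factor $(xx')^{\pm\imath\mu}$ in \eqref{eqn : low energy spectral measure} brings down $\log(xx')\sim-\rho$. So the uniform constant in your key estimate is really $\sup|\psi|+\sup|\psi'|$, which grows linearly in $\rho$, and the honest pointwise bound is $C(1+\rho)^{2}e^{-(n-1)\rho/2}$ rather than $C(1+\rho)e^{-(n-1)\rho/2}$. This is harmless --- for $q<2$ the Kunze--Stein integrand retains a genuine negative exponential rate $(n-1)(1/2-q'/4)$, so any fixed polynomial factor is absorbed --- but you should state the bound with an unspecified polynomial weight $(1+\rho)^{N}$ rather than assert the first power.
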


\begin{proof}

As is indicated in the description of Mazzeo-Melrose, the resolvent kernel near the diagonal is a pseudodifferential operator of degree $-2$. Then we have a kernel bound $$\text{Ker}\Big(\phi(L/|\alpha|)(L - \alpha)^{-1}\Big)  = O(\rho^{- n + 2}),\quad \mbox{for small $\rho$}.$$

If we live away from the diagonal, we rewrite this kernel in terms of the spectral theorem, $$\text{Ker}\Big(\phi(L/|\alpha|)(L - \alpha)^{-1}\Big) = \int \phi(\lambda^2/|\alpha|) (\lambda^2 - \alpha)^{-1} dE_{\sqrt{L}}(\lambda) d\lambda.$$  Then \eqref{eqn : low energy spectral measure} yields that
\begin{multline*}\lefteqn{\text{Ker}\Big(\phi(L/|\alpha|)(L - \alpha)^{-1}\Big)}\\ =  (xx')^{-(n - 1)/ 2} \int \phi(\lambda^2/|\alpha|)\frac{\lambda}{\lambda^2 - \alpha} \big(e^{\imath \lambda \log(xx')} a(\lambda) - e^{- \imath \lambda \log(xx')} a(-\lambda)\big) d\lambda\\ =  \frac{(xx')^{-(n - 1)/ 2}}{2} \int \phi(\lambda^2/|\alpha|)\bigg(\frac{1}{\lambda - \sqrt{\alpha}} + \frac{1}{\lambda + \sqrt{\alpha}}\bigg)  \sum_\pm \pm  e^{\pm\imath \lambda \log(xx')} a(\pm\lambda) d\lambda.\end{multline*}

For $\alpha \in \mathbb{C} \setminus \mathbb{R}$ with $|\arg(\alpha)| < \theta$, we write $\sqrt{\alpha} = \beta + \imath \epsilon$. Shifting $\lambda$ by $\pm \beta$ gives \begin{multline*}\lefteqn{\text{Ker}\Big(\phi(L/|\alpha|)(L - \alpha)^{-1}\Big)} \\= 
	\frac{(xx')^{-(n - 1)/ 2}}{2} e^{ \imath \log(xx') \beta}  \int \frac{\phi((\lambda+\beta)^2/|\alpha|)}{\lambda - \imath \epsilon}    e^{\imath \lambda \log(xx')} a(\lambda + \beta) d\lambda\\ 
	-	\frac{(xx')^{-(n - 1)/ 2}}{2} e^{-\imath \log(xx') \beta}  \int \frac{\phi((\lambda+\beta)^2/|\alpha|)}{\lambda - \imath \epsilon}  e^{-\imath \lambda \log(xx')} a(-\lambda - \beta) d\lambda\\ 
	+	\frac{(xx')^{-(n - 1)/ 2}}{2} e^{- \imath \log(xx') \beta}  \int \frac{\phi((\lambda-\beta)^2/|\alpha|)}{\lambda + \imath \epsilon}    e^{\imath \lambda \log(xx')} a(\lambda - \beta) d\lambda\\ 
	-	\frac{(xx')^{-(n - 1)/ 2}}{2} e^{\imath \log(xx') \beta}  \int \frac{\phi((\lambda-\beta)^2/|\alpha|)}{\lambda + \imath \epsilon}  e^{-\imath \lambda \log(xx')} a(-\lambda + \beta) d\lambda.\end{multline*} 
Now we  view the products $\phi((\lambda \pm \beta)^2/|\alpha|) a(\pm \lambda \pm \beta)$ as compactly supported smooth functions in $\lambda$. Then their Fourier transforms are Schwartz. The factors $1/(\lambda \pm \imath \epsilon)$ can be Fourier transformed by $$\mathcal{F}( e^{- c z} H(z) )(\zeta) = \frac{1}{c + \imath \zeta}, $$ where $c > 0$ and $H(\cdot)$ is the Heaviside unit step function. Hence above integrals are all convolutions of some Schwartz function and $e^{- |\epsilon| \cdot} H(\cdot)$. Invoking  $\rho \sim -\log(xx')$ if $\rho$ is large, we conclude that  $$\bigg|\text{Ker}\Big(\phi(L/|\alpha|)(L - \alpha)^{-1}\Big)\bigg| \leq C 
e^{-(n - 1)\rho/ 2}.$$ 


Finally, we complete the proof by applying \eqref{KS} to the kernel bound of $\phi(L/|\alpha|)(L - \alpha)^{-1}$. \end{proof}

On the other hand,  we prove the following high energy results
\begin{proposition} For $2(n + 1)/(n + 3) \leq q < 2$ and $|\alpha|>1$ with $|\arg(\alpha)| < \theta$, \begin{equation}\label{eqn : q to q'}\|\phi(L/|\alpha|)(L - \alpha)^{-1}\|_{L^{q}(\mathbb{H}^{n}) \rightarrow L^{q'}(\mathbb{H}^{n })} \leq C |\alpha|^{1/2 - 1/q}.\end{equation} For $2n/(n + 2) \leq q \leq 2(n + 1)/(n + 3)$ and $|\alpha|>1$ with $|\arg(\alpha)| < \theta$,\begin{equation}\label{eqn : q-q' (normal range)}\|\phi(L/|\alpha|)(L - \alpha)^{-1}\|_{L^q(\mathbb{H}^{n})\rightarrow L^{q'}(\mathbb{H}^{n})} \leq C |\alpha|^{n(1/q - 1/2) - 1}.\end{equation}\end{proposition}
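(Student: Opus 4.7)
The strategy combines the Stein--Tomas spectral-measure estimates of Theorem~\ref{thm:kernelbounds} with the semiclassical resolvent decomposition of Theorem~\ref{thm:semiclassical resolvent}, following in spirit the scheme Guillarmou--Hassell used on asymptotically Euclidean manifolds. Write $\mu := |\alpha|^{1/2}$ and choose the branch of $\sqrt{\alpha}$ with $\Im\sqrt{\alpha} \geq 0$, which is possible since $|\arg\alpha| < \theta < \pi/4$; then $|e^{-i\sqrt{\alpha}\rho}| = e^{-\rho\,\Im\sqrt\alpha} \leq 1$ uniformly in $\rho \geq 0$ and in $\alpha$ in the sector. This uniformity will be used repeatedly and is what prevents a logarithmic blow-up at $\arg\alpha \to 0$.

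\textbf{Step 1 (splitting).} Via Theorem~\ref{thm:semiclassical resolvent} decompose the kernel of $\phi(L/|\alpha|)(L-\alpha)^{-1}$ into a near-diagonal pseudodifferential contribution coming from $R_0$ and an off-diagonal contribution coming from $e^{-i\sqrt{\alpha}\rho}R_\infty$, and estimate each separately.

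\textbf{Step 2 (pseudodifferential piece).} The near-diagonal piece is a compactly supported semiclassical pseudodifferential operator of order $-2$ at semiclassical frequency $\mu$. A rescaling $\rho \to \mu\rho$ turns it into a unit-frequency $\Psi$DO of order $-2$ on a bounded region; local Sobolev embedding (with Hardy--Littlewood--Sobolev at the endpoint $1/q - 1/q' = 2/n$) yields an $L^q \to L^{q'}$ norm bound of
$$C|\alpha|^{n(1/q - 1/q')/2 - 1} = C|\alpha|^{n(1/q - 1/2) - 1},$$
which is exactly the target \eqref{eqn : q-q' (normal range)} in the lower range $q \in [2n/(n+2), 2(n+1)/(n+3)]$ and is dominated by $|\alpha|^{1/2 - 1/q}$ in the upper range.

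\textbf{Step 3 (off-diagonal piece).} Insert the pointwise kernel bound of Theorem~\ref{thm:semiclassical resolvent} into the Kunze--Stein inequality~\eqref{KS}, exactly as in the negative-energy case treated at the start of Section~3. Because of the choice of branch, the oscillatory factor contributes no growth. The weighted integral~\eqref{eqn : integral in KS} splits into a near contribution ($\rho \leq C$) governed by the fractional integral $\int_0^C \rho^{n - 1 - (n-1)q'/4}\, d\rho$, which converges for $q' \leq 2(n+1)/(n-1)$, i.e., $q \geq 2(n+1)/(n+3)$, and a far contribution ($\rho \geq C$) in which the exponential decay $e^{-(n-1)\rho/2}$ in $R_\infty$ combines with the Kunze--Stein weight $(\sinh\rho)^{n-1}e^{-(n-1)\rho/2}$ to yield $\int e^{(n-1)\rho(1-q'/2)/2}(1+\rho)\,d\rho$, convergent for every $q' > 2$. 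Applying the $2/q'$-power operation in \eqref{KS}, together with the extra $\mu^{-1}$ gain coming from the $(2\sqrt{\alpha})^{-1}$ residue that appears when $\phi(L/|\alpha|)$ is applied to the pole at $\lambda = \sqrt{\alpha}$, produces the claimed bound $C|\alpha|^{1/2 - 1/q}$ in the upper range \eqref{eqn : q to q'}.

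\textbf{Main obstacle.} The critical technical point is in Step~3: to obtain the correct power $|\alpha|^{1/2 - 1/q}$, one must extract an additional $\mu^{-1}$ gain from the spectral cutoff $\phi(L/|\alpha|)$ on top of the naive pointwise bound $|R_\infty| \lesssim \mu^{(n-3)/2}$ of Theorem~\ref{thm:semiclassical resolvent}, which by itself would give only the weaker $q$-independent bound $|\alpha|^{(n-3)/4}$. This gap is closed by observing that $\phi(L/|\alpha|)(L - \alpha)^{-1}$, expanded via partial fractions $(2\sqrt{\alpha})^{-1}[(\lambda - \sqrt{\alpha})^{-1} - (\lambda + \sqrt{\alpha})^{-1}]$ and integrated against the spectral measure, effectively performs a contour integration around the pole $\lambda = \sqrt{\alpha}$, whose residue carries the factor $(2\sqrt{\alpha})^{-1} \sim \mu^{-1}$. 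The uniform bound $|e^{-i\sqrt\alpha\rho}| \leq 1$ is precisely what ensures this residue extraction is uniform in $\arg\alpha$ and cancels the otherwise logarithmic divergence at the real axis; sharpness of the Stein--Tomas transition exponent $q = 2(n+1)/(n+3)$ matches the critical convergence of the near fractional integral, so the endpoint of the range in \eqref{eqn : q to q'} is natural.
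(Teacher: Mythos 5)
Your approach for the lower range \eqref{eqn : q-q' (normal range)} is broadly consistent with what the paper does (it simply invokes the Guillarmou--Hassell argument verbatim), but your Step 3 for the upper range \eqref{eqn : q to q'} has a fatal gap. The Kunze--Stein inequality \eqref{KS} sees only $|\kappa(\rho)|$, so it discards the oscillation $e^{-\imath\sqrt{\alpha}\rho}$ entirely. Feeding the pointwise bounds of Theorem \ref{thm:semiclassical resolvent} into \eqref{KS}, the prefactor $(1+|\lambda|)^{n/2-3/2}=\mu^{(n-3)/2}$ simply factors out of the $\rho$-integral, so the best you can get this way is $C_q\,\mu^{(n-3)/2}=C_q|\alpha|^{(n-3)/4}$. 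The target $|\alpha|^{1/2-1/q}=|\alpha|^{1/q'-1/2}$ is \emph{negative}-power (at the endpoint $q=2(n+1)/(n+3)$ it is $|\alpha|^{-1/(n+1)}$), so for $n\geq 4$ your bound is strictly weaker and does not prove the proposition; this route genuinely works only for $n=3$, which is exactly the remark made in the introduction of the paper. Your proposed repair --- extracting a factor $\mu^{-1}$ from the residue $(2\sqrt{\alpha})^{-1}$ --- cannot close the gap: it would yield at best the $q$-independent power $|\alpha|^{(n-5)/4}$, not the $q$-dependent $|\alpha|^{1/2-1/q}$, and moreover the $\lambda^{-1}$ prefactor of the resolvent is already accounted for in the exponent $n/2-3/2$ of Theorem \ref{thm:semiclassical resolvent}, so you would be double counting.

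The point your proposal never confronts is that $\phi(L/|\alpha|)(L-\alpha)^{-1}$ has \emph{no} uniform $L^2\to L^2$ bound as $\arg\alpha\to 0$ (by the spectral theorem its norm is $\sup_{\lambda^2\in\operatorname{supp}\phi(\cdot/|\alpha|)}|\lambda^2-\alpha|^{-1}\sim|\Im\alpha|^{-1}$), so any argument that does not exploit cancellation in the spectral variable must blow up logarithmically at the real axis --- your appeal to $|e^{-\imath\sqrt{\alpha}\rho}|\leq 1$ addresses growth in $\rho$, not this divergence. The paper's proof instead runs Stein's complex interpolation on the analytic family $H_{s,\alpha}(\sqrt{L/|\alpha|})$ with $H_{s,\alpha}(x)=e^{s^2}|\alpha|^s\phi(x^2)(1-x^2\pm\imath 0)^s$: on the line $\Re s=0$ the multiplier is uniformly bounded on $L^2$ (this is where the near-real-axis singularity is tamed), while on the line $\Re s=-j-1$ with $j>(n-1)/2$ one integrates by parts against $(1-\lambda\pm\imath 0)^{-j-1+\imath t}$ and uses the \emph{derivative} bounds \eqref{eqn : spectral measure upper bound} on the spectral measure to get an $L^1\to L^\infty$ kernel bound $C_{j,t}|\alpha|^{-1/2}$; interpolating at $\theta=1/(j+1)$ produces exactly the exponent $1/2-1/q$ at $q=2(j+1)/(j+2)$. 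That mechanism --- uniform $L^2$ boundedness of a regularized family plus pointwise control of derivatives of the spectral measure --- is the essential content of the proof and is absent from your argument.
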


\begin{proof} The estimates \eqref{eqn : q-q' (normal range)} is the same with the Euclidean case in \cite{Guillarmou-Hassell}. We can use the proof in there verbatim to show \eqref{eqn : q-q' (normal range)}.
 
The strategy to prove \eqref{eqn : q to q'} is to use the Stein's complex interpolation \cite{interpolation} for the analytic family of operators $H_{s, \alpha}(\sqrt{L/|\alpha|})$, where $$H_{s, \alpha}(x)  = e^{s^2} |\alpha|^s \phi(x^2) (1 - x^2 + \imath 0)^s,$$ provided $\phi$ is a cut-off function supported around $1$. 

In particular, it suffices to prove $(L^q, L^{q'})$ estimates, with some $2(n + 1)/(n + 3) \leq q < 2$, for $$H_{-1, \alpha}(\sqrt{L/|\alpha|}) = e \text{Ker} \phi(L/|\alpha|) (\alpha - L + \imath 0)^{-1},$$ for $\alpha > 1$.
Here we are only concerned about the estimates at the spectrum as the off-spectrum part is less singular.

By complex interpolation, it suffices to establish $(L^2, L^2)$ estimates for $H_{\imath t, \alpha}(\sqrt{L/|\alpha|})$ and $(L^1, L^\infty)$ estimates for
$H_{- j - 1 + \imath t, \alpha}(\sqrt{L/|\alpha|})$ with any integer $j > (n-1)/2$. As the former is trivial, it suffices to prove an upper bound of the kernel of $H_{- j - 1 + \imath t, \alpha}(\sqrt{L/|\alpha|})$.

We firstly rewrite the kernel of $H_{- j - 1 + \imath t, \alpha}(\sqrt{L/|\alpha|})$, in terms of the spectral measure, as $$e^{(-j -1+\imath t)^2}|\alpha|^{-j+\imath t - 1} \int_0^\infty \phi\Big(\frac{\lambda^2}{|\alpha|}\Big) \bigg(1 - \frac{\lambda^2}{|\alpha|} + \imath 0\bigg)^{- j -1 + \imath t} dE_{\sqrt{L}}(\lambda) d\lambda.$$
To make use of the spectral measure upper bound \eqref{eqn : spectral measure upper bound}, we change coordinates and further rewrite it as
\begin{eqnarray*}\lefteqn{H_{- j - 1 + \imath t, \alpha}(\sqrt{L/|\alpha|})}\\ &=&
e^{(-j -1+\imath t)^2} |\alpha|^{-j+\imath t - 1/2} \int_0^\infty \phi(\lambda^2) (1 - \lambda^2  + \imath 0)^{- j -1 + \imath t} dE_{\sqrt{L}}(\sqrt{\alpha}\lambda) d\lambda
\\ &=&
e^{(-j -1+\imath t)^2} |\alpha|^{-j+\imath t - 1/2} \int_0^\infty \phi(\lambda) (1 - \lambda + \imath 0)^{- j -1 + \imath t} dE_{\sqrt{L}}(\sqrt{\alpha}\sqrt{\lambda}) \frac{d\lambda}{2\sqrt{\lambda}}
\\&=& \frac{e^{(-j-1+\imath t)^2} |\alpha|^{-j-1/2+\imath t}}{\prod_{k = 0}^{j-2}(-j - 1 + k + \imath t)} \int_0^\infty \phi(\lambda) \frac{d^{j - 1}}{d\lambda^{j - 1}}\Big((1 - \lambda + \imath 0)^{- j -1 + \imath t}\Big) dE_{\sqrt{L}}(\sqrt{\alpha}\sqrt{\lambda}) \frac{d\lambda}{2\sqrt{\lambda}}.\end{eqnarray*}
We now make integration by parts and estimate the kernel as follows.
\begin{eqnarray*}\lefteqn{(-1)^{j-1} H_{- j - 1 + \imath t, \alpha}(\sqrt{L/|\alpha|})}
\\ &=& \frac{e^{(-j-1+\imath t)^2} |\alpha|^{-j-1/2+\imath t}}{\prod_{k = 0}^{j-2}(-j - 1 + k + \imath t)} \int_0^\infty  (1 - \lambda  + \imath 0)^{- 2 + \imath t}\frac{d^{j - 1}}{d\lambda^{j - 1}}\bigg(\frac{\phi(\lambda)}{2\sqrt{\lambda}} dE_{\sqrt{L}}(\sqrt{\alpha}\sqrt{\lambda}) \bigg) d\lambda.\end{eqnarray*}

Now we use a family of distributions $\{\chi_+^a = x_+^a/\Gamma(a + 1)\}$ defined on all $a \in \mathbb{C}$, where $\Gamma$ is the gamma function and $$x_+^a = \left\{ \begin{array}{ll}x^a & \mbox{if $x \geq 0$}\\0 & \mbox{if $x < 0$} \end{array} \right..$$ It follows that $\chi_+^0(x) = H(x)$ and $\chi_+^{-k} = \delta_0^{(k - 1)}$, where $H$ is the Heaviside function and $\delta_0^{(k - 1)}$ is the $k-1$-derivative of the delta function at $0$. By \cite[p.608, (26)]{Guillarmou-Hassell},  the integral above obeys \begin{eqnarray*}
\lefteqn{\bigg|\int_0^\infty  (1 - \lambda  + \imath 0)^{- 2 + \imath t}\frac{d^{j - 1}}{d\lambda^{j - 1}}\bigg(\frac{\phi(\lambda)}{2\sqrt{\lambda}} dE_{\sqrt{L}}(\sqrt{\alpha}\sqrt{\lambda}) \bigg) d\lambda\bigg|}
\\ &\leq& C (1 + |t|) e^{\pi t/ 2}
\sup_{\sigma}\bigg|\bigg(\chi_+^{-1}  \ast \frac{d^{j - 1}}{d\lambda^{j - 1}}\bigg(\frac{\phi(\lambda)}{2\sqrt{\lambda}} dE_{\sqrt{L}}(\sqrt{\alpha}\sqrt{\lambda}) \bigg)\bigg)(\sigma) \bigg|^{1/2}\\ &&\quad\quad\quad\quad\quad\quad\,
\sup_{\sigma}\bigg|  \bigg( \chi_+^{-3}  \ast \frac{d^{j - 1}}{d\lambda^{j - 1}}\bigg(\frac{\phi(\lambda)}{2\sqrt{\lambda}} dE_{\sqrt{L}}(\sqrt{\alpha}\sqrt{\lambda}) \bigg) \bigg) (\sigma)\bigg|^{1/2}
\\ &\leq& C (1 + |t|) e^{\pi t/ 2}
\sup_{\sigma}\bigg|\int_0^\infty \delta(\sigma - \lambda)   \frac{d^{j - 1}}{d\lambda^{j - 1}}\bigg(\frac{\phi(\lambda)}{2\sqrt{\lambda}} dE_{\sqrt{L}}(\sqrt{\alpha}\sqrt{\lambda}) \bigg) d\lambda\bigg|^{1/2}\\ &&\quad\quad\quad\quad\quad\quad\,
\sup_{\sigma}\bigg|\int_0^\infty  \delta^{(2)}(\sigma - \lambda)    \frac{d^{j - 1}}{d\lambda^{j - 1}}\bigg(\frac{\phi(\lambda)}{2\sqrt{\lambda}} dE_{\sqrt{L}}(\sqrt{\alpha}\sqrt{\lambda}) \bigg) d\lambda\bigg|^{1/2}
\\ &\leq& C (1 + |t|) e^{\pi t/ 2}
\sup_{\sigma}\bigg|  \frac{d^{j - 1}}{d\lambda^{j - 1}}\bigg(\frac{\phi(\lambda)}{2\sqrt{\lambda}} dE_{\sqrt{L}}(\sqrt{\alpha}\sqrt{\lambda}) \bigg)\bigg|_{\lambda = \sigma} \bigg|^{1/2}\\ &&\quad\quad\quad\quad\quad\quad\,
\sup_{\sigma}\bigg|  \frac{d^{j + 1}}{d\lambda^{j + 1}}\bigg(\frac{\phi(\lambda)}{2\sqrt{\lambda}} dE_{\sqrt{L}}(\sqrt{\alpha}\sqrt{\lambda}) \bigg)\bigg|_{\lambda = \sigma}  \bigg|^{1/2}.
\end{eqnarray*}

Recall that $\phi$ is supported around $1$. The spectral measure estimates \eqref{eqn : spectral measure upper bound} yields that
$$|H_{- j - 1 + \imath t, \alpha}(\sqrt{L/|\alpha|})| \leq C_{j, t} |\alpha|^{- j - 1/2 + (n-1)/4 + j/2} \leq C_{j, t} |\alpha|^{-1/2}.$$

Noting that $\theta = 1/(j + 1), q = 2(j + 1)/(j + 2)$ solves the elementary system of equations, $$\left\{ \begin{array}{l} 0(1 - \theta) + (j + 1) \theta = 1\\ (1-\theta)/2 + \theta = 1/q\end{array} \right.,$$ we obtain that for any $1/2 < \epsilon < 1$, $$ \|\phi(L/|\alpha|)(L - \alpha + \imath 0)^{-1}\|_{L^q(\mathbb{H}^{n}) \rightarrow L^{q'}(\mathbb{H}^{n})} \leq C |\alpha|^{1/2 - 1/q}.$$

\end{proof}

\section{Eigenvalue bounds for Schr\"odinger operators with complex potentials}

Inspired by the work of Frank-Simon \cite{Frank-Simon II, Frank-Simon III} , we apply the Sobolev inequalities on hyperbolic space to the study of eigenvalue bounds for a Schr\"odinger operator with complex potential. More precisely, for an $L^2$-eigenvalue $\lambda$ of a Schr\"odinger operator, $\Delta + V$, on $\mathbb{H}^{n + 1}$ with a complex potential $V$, we want to understand the upper bound of $|\lambda|$ in terms of the Lebesgue norm of $|V|$. In addition, we will prove that if the norm of the potential is sufficiently small the Schr\"odinger operator has no eigenvalues.

Let $\lambda \in \mathbb{C}$ be an eigenvalue and $\psi \in H^1(\mathbb{H}^{n})$ the corresponding eigenfunction of $L + V$, namely $$(L + V) \psi = \lambda \psi.$$

\subsection*{Short range $0 < \gamma \leq 1/2$}
\begin{proof}[Proof of Theorem \ref{thm : short range}]
Assume first that $\lambda \in \mathbb{C} \setminus [0, \infty)$. We write $$\gamma + \frac{n}{2} = \frac{p}{2-p}.$$ This implies $2n/(n + 2) < p \leq 2(n+1)/(n+3)$ and $2(n + 1)/(n - 1) \leq p' < 2n/(n - 2)$. Proposition \ref{prop : m-sectorial} shows that $\Delta + (V - n^2/4)$ is an m-sectorial operator with domain in $H^1(\mathbb{H})$. By Sobolev's embedding, we further have $\psi \in L^{2n/(n - 2)}(\mathbb{H})$, whence $\psi \in L^r(\mathbb{H})$ for $2 \leq r \leq 2n/(n - 2)$. Additionally, we have $$\psi = (L - \lambda)^{-1}(L - \lambda) \psi = - (L - \lambda)^{-1} (V\psi).$$

Using H\"older's inequality and uniform Sobolev inequalities  \eqref{eqn : q-q' sobolev all spectral parameters (normal range)}, we obtain that \begin{eqnarray*}
\|\psi\|_{L^{p'}(\mathbb{H}^n)} &\leq & \|(L - \lambda)^{-1}\|_{L^p(\mathbb{H}^n) \rightarrow  L^{p'}(\mathbb{H}^n)} \|V\psi\|_{L^p(\mathbb{H}^n)}\\
&\leq& C |\lambda|^{n(1/p - 1/p') - 1} \|V\|_{L^{\gamma + n/2}(\mathbb{H}^n)} \|\psi\|_{L^{p'}(\mathbb{H}^n)}.
\end{eqnarray*}

Noting that $$\frac{n}{2} \bigg(\frac{1}{p} - \frac{1}{p'}\bigg) - 1 = - \frac{\gamma}{\gamma + n/2},$$ we conclude that \begin{equation}
\label{eqn : eigenvalue bound 1} |\lambda|^\gamma \leq C \|V\|^{\gamma + n/2}_{L^{\gamma + n/2}(\mathbb{H}^n)}.
\end{equation}

If $\lambda \in (0, \infty)$, we instead consider $$\psi_\epsilon = (L - \lambda - \imath \epsilon)^{-1} (L - \lambda) \psi = f_\epsilon(L) \psi, \quad  \mbox{with $f_\epsilon(t) = (t - \lambda)/(t - \lambda - \imath \epsilon)$ for $t > 0$}.$$ Then the spectral theorem yields that $$\|\psi_\epsilon - \psi\|^2_{L^2(\mathbb{H}^n)} = \|f_\epsilon(L) \psi - \psi\|^2_{L^2(\mathbb{H}^n)} = \int |f_\epsilon(t) - 1|^2 d(E_L(t)\psi, \psi)_{L^2(\mathbb{H}^n)},$$
where $E_L(t)$ is the spectral projection of $L$. In view of the fact that $f_\epsilon \rightarrow 1$ as $\epsilon \rightarrow 0$, the dominated convergence theorem yields that $\psi_\epsilon \rightarrow \psi$ in $L^2(\mathbb{H}^n)$ as $\epsilon \rightarrow 0$.

For $\psi_\epsilon$, we similarly obtain that
$$\|\psi_\epsilon\|_{L^{p'}(\mathbb{H}^n)} \leq C|\lambda|^{n(1/p - 1/p')/2 - 1} \|V\|_{L^{\gamma + n/2}(\mathbb{H}^{n})} \|\psi\|_{L^{p'}(\mathbb{H}^n)}.$$ It follows that there exists $\tilde{\psi} \in L^{p'}(\mathbb{H}^n)$ such that $\psi_\epsilon \rightarrow \tilde{\psi}$ in the weak $\ast$ topology of $L^{p'}(\mathbb{H}^n)$, whence $\psi = \tilde{\psi} \in L^{p'}(\mathbb{H}^n)$. Consequently, we have $$\|\psi\|_{L^{p'}(\mathbb{H}^n)} \leq \liminf_{\epsilon \rightarrow 0} \|\psi_\epsilon\|_{L^{p'}(\mathbb{H}^n)} \leq |\lambda|^{n(1/p - 1/p') - 1} \|V\|_{L^{\gamma + n/2}(\mathbb{H}^n)} \|\psi\|_{L^{p'}(\mathbb{H}^n)},$$ which completes the proof for the short range case.
\end{proof}
\subsection*{Long range $\gamma > 1/2$}

\begin{proof}[Proof of Theorem \ref{thm : long range}]
We still use the proof for the short range case but replace \eqref{eqn : q-q' sobolev all spectral parameters (normal range)} by \eqref{eqn : q-q' sobolev all spectral parameters}. This leads to that  \begin{eqnarray*}\|\psi\|_{L^{p'}(\mathbb{H}^n)} &\leq& \|(L - \lambda)^{-1}\|_{L^p(\mathbb{H}^n) \rightarrow L^{p'}(\mathbb{H}^n)} \|V\psi\|_{L^p(\mathbb{H}^n)}\\ &\leq& C |\lambda|^{1/2 - 1/p} \|V\|_{L^{\gamma + n/2}(\mathbb{H}^n)} \|\psi\|_{L^{p'}(\mathbb{H}^n)}. \end{eqnarray*}

Combing this together with $$\frac{1}{p} = \frac{1 + \gamma + n/2}{2(\gamma + n/2)},$$ we conclude that, $$|\lambda|^{1/2} \leq C \|V\|_{L^{\gamma + n/2}}^{\gamma + n/2}.$$
\end{proof}
\subsection*{Sufficiently 'small' potential $V$}
\begin{proof}[Proof of Theorem \ref{thm : no eigenvalue}]
In the end, we prove, by a contradiction argument, that if the norm $\|V\|_{L^{\gamma + n/2}(\mathbb{H}^n)}$ is sufficiently small, the Schr\"odinger operator $\Delta + V$ has no eigenvalues.

Let $\lambda$ be an eigenvalue of $L + V$ with $\|V\|_{L^{\gamma + n/2}(\mathbb{H}^n)} < c < 1$. It follows, from the eigenvalue bounds we have proved, that  $|\lambda| < C c^{(\gamma + n/2)/\gamma} < C$ for $\gamma \geq 0$. When $\lambda \notin [0, \infty)$, we have that for any eigenfunction $\psi$ of $\lambda$ \begin{eqnarray*}\|\psi\|_{L^{p'}(\mathbb{H}^n)} &\leq& \|(L-\lambda)^{-1}\|_{L^p(\mathbb{H}^n) \rightarrow L^{p'}(\mathbb{H}^n)} \|V\psi\|_{L^p(\mathbb{H}^n)}\\
&\leq& C_1 \|V\|_{L^{\gamma + n/2}(\mathbb{H}^n)} \|\psi\|_{L^{p'}(\mathbb{H}^n)},\end{eqnarray*} where we invoked the low energy Sobolev inequalities \eqref{eqn : q-q' sobolev small spectral parameter}.  When $\lambda \in (0, \infty)$, it is similar to deduce that \begin{eqnarray*}\|\psi\|_{L^{p'}(\mathbb{H}^n)}&\leq&\liminf_{\epsilon \rightarrow 0}\|\psi_\epsilon\|_{L^{p'}(\mathbb{H}^n)}\\ &\leq& \|(L-\lambda - \imath\epsilon)^{-1}\|_{L^p(\mathbb{H}^n) \rightarrow L^{p'}(\mathbb{H}^n)} \|V\psi\|_{L^p(\mathbb{H}^n)}\\
&\leq& C_2 \|V\|_{L^{\gamma + n/2}(\mathbb{H}^n)} \|\psi\|_{L^{p'}(\mathbb{H}^n)},\end{eqnarray*} where $\psi_\epsilon = (L -\lambda -\imath\epsilon)^{-1}(L-\lambda)\psi$ as above.
If $c < \min\{1/C_1, 1/C_2\}$, these inequalities fail to hold. Therefore, there are no eigenvalues.
\end{proof}



\begin{appendix}

\section{M-Sectorial operators}

The purpose of this section is to prove that
\begin{proposition}\label{prop : m-sectorial}Given a complex potential $V \in L^p(\mathbb{H}^n)$ with $n/2 \leq p < \infty$, the Schr\"odinger operator $\Delta + V$ is an m-sectorial operator with a domain contained in $H^1(\mathbb{H}^n)$. The spectrum of $\Delta + V$ consists of the essential spectrum of $\Delta$ and  isolated eigenvalues of finite algebraic multiplicity.\end{proposition}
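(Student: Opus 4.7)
The plan is to construct $\Delta + V$ as an m-sectorial operator via the quadratic form method and then derive its spectral decomposition through a relatively compact perturbation argument. I would first split the potential as $V = V_1 + V_2$, where $V_2 := V \chi_{\{|V| \leq M\}} \in L^\infty \cap L^p$ and $V_1 := V \chi_{\{|V| > M\}}$ has arbitrarily small $L^{n/2}$ norm for $M$ large: when $p = n/2$ this follows from dominated convergence, while for $p > n/2$ the explicit bound $\|V_1\|_{L^{n/2}}^{n/2} \leq M^{n/2 - p}\|V\|_{L^p}^p$ suffices.

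Next I would consider the sesquilinear form
\[
\mathfrak{h}[u, v] = \langle \sqrt{L}\,u, \sqrt{L}\,v\rangle_{L^2} + \tfrac{(n-1)^2}{4}\langle u, v\rangle_{L^2} + \int_{\mathbb{H}^n} V\, u\,\overline{v}\, dg,
\]
with form domain $\mathcal{D}(\mathfrak{h}) = H^1(\mathbb{H}^n) = \mathcal{D}(\sqrt{L})$. The Sobolev embedding $H^1(\mathbb{H}^n) \hookrightarrow L^{2n/(n-2)}(\mathbb{H}^n)$, which can be extracted from the heat kernel bound \eqref{eqn : heat kernel} via the standard Varopoulos ultracontractivity argument, combined with Hölder's inequality gives
\[
\left|\int V_1\, u\,\overline{v}\, dg\right| \leq \|V_1\|_{L^{n/2}}\,\|u\|_{L^{2n/(n-2)}}\,\|v\|_{L^{2n/(n-2)}} \leq C\|V_1\|_{L^{n/2}}\,\|u\|_{H^1}\,\|v\|_{H^1}.
\]
Choosing $M$ so that $C\|V_1\|_{L^{n/2}} < 1$ makes $V_1$ form-bounded relative to $L$ with relative bound strictly less than $1$, while the bounded $V_2$ adds a trivially sectorial perturbation. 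Hence $\mathfrak{h}$ is a closed sectorial form, and Kato's first representation theorem (in its sectorial version) produces a unique m-sectorial operator $T$ with $\mathcal{D}(T) \subset H^1(\mathbb{H}^n)$ acting as $\Delta + V$ in the distributional sense.

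For the spectral statement I would show that $V(L + c)^{-1}$ is compact on $L^2(\mathbb{H}^n)$ for sufficiently large $c > 0$. Approximating $V$ in $L^p$ norm by bounded compactly supported functions $V_\varepsilon$, the difference $(V - V_\varepsilon)(L + c)^{-1}$ is small in operator norm by the Sobolev bound already established, while $V_\varepsilon(L + c)^{-1}$ is compact since $(L+c)^{-1}$ maps $L^2$ into $H^2_{\mathrm{loc}}$ and the Rellich--Kondrachov embedding $H^2(K) \hookrightarrow L^2(K)$ is compact on any relatively compact $K \subset \mathbb{H}^n$. Thus $V(L+c)^{-1}$ is a norm limit of compact operators, hence compact, and a second resolvent identity then shows $(\Delta + V - z)^{-1} - (\Delta - z)^{-1}$ is compact on the resolvent set. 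Invoking the Weyl-type theorem for non-self-adjoint sectorial operators as set up by Frank \cite{Frank-Simon III}, the essential spectrum is preserved, $\sigma_{\mathrm{ess}}(\Delta + V) = \sigma_{\mathrm{ess}}(\Delta) = [(n-1)^2/4, \infty)$, and the remainder consists of isolated eigenvalues of finite algebraic multiplicity.

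The main obstacle will be applying the essential-spectrum invariance in the non-self-adjoint setting, where several inequivalent notions of essential spectrum coexist and one has to check that the notion preserved under relatively compact perturbation in the Frank framework is the one that corresponds to the complement of the discrete spectrum we want. The m-sectoriality portion itself is routine given the Sobolev inequality on $\mathbb{H}^n$.
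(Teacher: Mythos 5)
Your construction of the m-sectorial operator is sound and is a genuine alternative to the paper's: the paper also works with forms, but instead of splitting $V = V_1 + V_2$ and invoking a KLMN-type argument plus Kato's representation theorem, it factorizes $V = \sqrt{|V|}\cdot\sqrt{V}$ and cites an abstract lemma of Frank (Lemmas B.1--B.2 of \cite{Frank-Simon III}) which, given only the compactness of $\sqrt{|V|}(\Delta+1)^{-1/2}$ on $L^2$, delivers the closed sectorial form, the m-sectorial operator $\Delta + \sqrt{|V|}^{\,*}\sqrt{V}$, \emph{and} the spectral decomposition in one stroke. The single analytic input there is proved exactly as you propose for your operator: $(\Delta+1)^{-1/2}\colon L^2 \to L^q$ for $2 \le q \le 2n/(n-2)$, H\"older against $\sqrt{|V|}\in L^{2p}$, approximation by compactly supported $W_j$, and Rellich--Kondrachov. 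So your first half buys nothing and loses nothing; it is the second half where the routes diverge and where your version breaks.

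The genuine gap is the claimed compactness of $V(L+c)^{-1}$: for $V \in L^{n/2}$ this operator is in general not even bounded on $L^2$ in low dimensions. Take $n=3$, $p=3/2$: $(L+c)^{-1}$ maps $L^2$ into $H^2 \hookrightarrow L^2\cap L^\infty$, and H\"older then only yields $Vu \in L^{3/2}$; to land in $L^2$ one would need $V\in L^2$. A local model is $V\sim \rho^{-2+\epsilon}$ near a point, which lies in $L^{3/2}_{\mathrm{loc}}(\mathbb{H}^3)$ while $V\cdot 1 \notin L^2_{\mathrm{loc}}$. The same failure occurs at $n=4$, $p=2$. This is precisely why one must distribute the potential symmetrically: $\sqrt{|V|}\in L^{2p}$ with $2p\ge n$ pairs with $(L+c)^{-1/2}\colon L^2\to H^1\hookrightarrow L^{2n/(n-2)}$ in \emph{every} dimension $n\ge 3$, so the correct compact objects are $\sqrt{|V|}(L+c)^{-1/2}$ and the sandwiched Birman--Schwinger operator $\sqrt{V}(L-z)^{-1}\sqrt{|V|}$. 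Correspondingly, the "second resolvent identity" must be taken in its factorized Konno--Kuroda form; the naive identity $(\Delta+V-z)^{-1}-(\Delta-z)^{-1}=-(\Delta+V-z)^{-1}V(\Delta-z)^{-1}$ is not defined for a form sum, since the range of $(\Delta-z)^{-1}$ need not meet the operator domain of $\Delta+V$. Your closing worry about which notion of essential spectrum survives the perturbation is legitimate but downstream of this: once you replace $V(L+c)^{-1}$ by the factorized operators, the abstract framework of Frank that the paper cites settles both the essential-spectrum invariance and the discreteness of the remaining spectrum (via analytic Fredholm theory for the Birman--Schwinger operator) simultaneously.
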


To begin with, we review the relevant definitions on unbounded operators and forms in Hilbert spaces. We refer the reader to \cite[Chapter V-VI.]{Kato} for more information. Suppose $\mathbf{H}$ is a Hilbert space with an inner product $(\cdot, \cdot)$.
\begin{itemize}
\item An operator $T$ in $\mathbf{H}$ is said to be accretive if $$\Re (Tu, u) \geq 0 \quad \mbox{for all $u\in \text{Dom}\,(T)$}.$$
\item If $T + \alpha$ for some scalar $\alpha$ is accretive, we say $T$ is quasi-accretive.
\item If an accretive operator $T$ is surjective and also obeys that for any $\lambda$ with $\Re \lambda > 0$  in the resolvent set, $$\|(T + \lambda)^{-1}\| \leq (\Re \lambda)^{-1},$$ $T$ is said to be m-accretive.
\item If $T + \alpha$ for some scalar $\alpha$ is m-accretive, we say $T$ is quasi-m-accretive.
\item We say $T$ is sectorially valued or simply sectorial with vertex $\gamma$ and semi-angle $\theta$, if $$\{\Re(Tu, u) : u \in \text{Dom}(T)\}\subset \{z \in \mathbb{C} : |\arg (z - \gamma)| \leq \theta < \pi/2\}.$$
\item If $T$ is sectorial and quasi-m-accretive, we say $T$ is m-sectorial.

\item We say a quadratic form $\mathfrak{t}$ in $\mathbf{H}$ is sectorially bounded from the left or simply sectorial, if  $$\{\mathfrak{t}[u, u] : \|u\|=1, u \in \text{Dom}(\mathfrak{t})\} \subset \{z \in \mathbb{C} : |\arg(z - \gamma)| \leq \theta < \pi/2, \gamma \in \mathbb{R}\}.$$

\item A sectorial form is said to be closed if that a sequence $\{u_n \in \text{Dom}(\mathfrak{t})\}$ converges to $u$ in $\mathbf{H}$ and  $\mathfrak{t}[u_n - u_m] \rightarrow 0$ as $n, m \rightarrow \infty$ implies that $u\in \text{Dom}(\mathfrak{t})$ and
 $\mathfrak{t}[u_n - u] \rightarrow 0$ as $n \rightarrow \infty$.

\end{itemize}

Let $H_0$ be a self-adjoint, non-negative operator in a Hilbert space $\mathbf{H}$. In addition, we suppose $G_0$ and $G$ are operators from $\mathbf{H}$ to another Hilbert space $\mathbf{G}$ such that  $$\text{Dom} (H_0^{1/2}) \in \text{Dom} (G_0) \cap \text{Dom} (G),$$ but also \begin{equation}\label{eqn : compact operators}\mbox{$G_0(H_0 + 1)^{-1/2}$ and $G(H_0 + 1)^{-1/2}$ are compact.}\end{equation} Under such assumptions, Frank \cite[Lemma B.1, Lemma B.2]{Frank-Simon III} proved that
\begin{lemma}The quadratic form $$\|H_0^{1/2}u\|_{\mathbf{H}} + (Gu, Gu_0)_{\mathbf{G}}$$
 with $\text{Dom}(H_0^{1/2})$ is closed and sectorial. Moreover, it generates an m-sectorial operator $H = H_0 + G^\ast G_0$. The spectrum of $H$ consists of the essential spectrum of $H_0$ and  isolated eigenvalues of finite algebraic multiplicity.\end{lemma}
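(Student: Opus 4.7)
The plan is to invoke the cited lemma of Frank with $\mathbf{H} = \mathbf{G} = L^2(\mathbb{H}^n)$, $H_0 = \Delta$, and a Birman--Schwinger factorization $V = G^\ast G_0$. Concretely, set $G_0 = M_{|V|^{1/2}}$ and $G = M_{\overline{\sgn(V)}\, |V|^{1/2}}$, where $\sgn(V) := V/|V|$ on $\{V \neq 0\}$, so that a direct computation gives $G^\ast G_0 = M_V$. Since $\Delta$ is self-adjoint and non-negative with essential spectrum $[(n-1)^2/4,\infty)$, Frank's lemma will immediately yield both conclusions of the proposition: $\Delta + V$ is m-sectorial with form domain inside $H^1(\mathbb{H}^n)$, and its spectrum decomposes into the essential spectrum of $\Delta$ together with isolated eigenvalues of finite algebraic multiplicity. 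All that remains is to verify the two hypotheses of the lemma.

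The domain inclusion $\mathrm{Dom}(\Delta^{1/2}) = H^1(\mathbb{H}^n) \subseteq \mathrm{Dom}(G) \cap \mathrm{Dom}(G_0)$ reduces by H\"older to the finiteness of $\int |V||\psi|^2 \leq \|V\|_{L^p} \|\psi\|_{L^{2p/(p-1)}}^2$. The hypothesis $n/2 \leq p < \infty$ forces $2 \leq 2p/(p-1) \leq 2n/(n-2)$, and the Sobolev embedding $H^1(\mathbb{H}^n) \hookrightarrow L^{2n/(n-2)}(\mathbb{H}^n)$ interpolated with the trivial $H^1 \hookrightarrow L^2$ inclusion covers all the intermediate exponents. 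The same chain yields the operator bound
\[
\|M_{|W|^{1/2}}(\Delta + 1)^{-1/2}\|_{L^2 \to L^2} \leq C \|W\|_{L^p}^{1/2}, \qquad W \in L^p(\mathbb{H}^n),
\]
which is the key quantitative input for the next step.

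For the compactness of $G(\Delta+1)^{-1/2}$ and $G_0(\Delta+1)^{-1/2}$, by symmetry it suffices to treat $M_{|V|^{1/2}}(\Delta+1)^{-1/2}$. My strategy is approximation: set $V_k = V \, \mathbf{1}_{B_k} \, \mathbf{1}_{\{|V|\leq k\}}$, where $B_k$ is the geodesic ball of radius $k$ about a fixed basepoint. For each $k$, the operator $M_{|V_k|^{1/2}}(\Delta+1)^{-1/2}$ factors as
\[
L^2(\mathbb{H}^n) \xrightarrow{(\Delta+1)^{-1/2}} H^1(\mathbb{H}^n) \xrightarrow{\mathrm{restrict}} H^1(B_k) \xrightarrow{\mathrm{Rellich}} L^2(B_k) \xrightarrow{M_{|V_k|^{1/2}}} L^2(\mathbb{H}^n),
\]
where Rellich--Kondrachov on the precompact ball provides a compact link. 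The operator bound above then gives
\[
\|(M_{|V|^{1/2}} - M_{|V_k|^{1/2}})(\Delta+1)^{-1/2}\|_{L^2 \to L^2} \leq C \|V - V_k\|_{L^p}^{1/2} \longrightarrow 0
\]
by dominated convergence in $L^p$ (valid since $p < \infty$), so $M_{|V|^{1/2}}(\Delta+1)^{-1/2}$ is a norm limit of compact operators and hence compact.

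The main technical obstacle is this operator-norm approximation step: one must combine the global Sobolev embedding on $\mathbb{H}^n$ (with finite constant), the local Rellich compactness on geodesic balls, and $L^p$-dominated convergence to pass from bounded compactly supported truncations to the full potential $V$. Each of these ingredients is classical but must be invoked cleanly in the non-compact hyperbolic setting. Once the two hypotheses of Frank's lemma are verified, the lemma produces both the m-sectoriality of $\Delta + V$ with domain contained in $H^1(\mathbb{H}^n)$ and the asserted decomposition of its spectrum, completing the proof.
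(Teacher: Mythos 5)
You have proven the wrong statement. The lemma in question is the \emph{abstract} operator-theoretic result about a general non-negative self-adjoint $H_0$ on a Hilbert space $\mathbf{H}$ and factorizing perturbations $G, G_0 : \mathbf{H} \to \mathbf{G}$ satisfying the compactness hypothesis \eqref{eqn : compact operators}; the paper does not prove it but attributes it to Frank [Frank--Simon III, Lemmas B.1 and B.2]. Your proposal instead takes this lemma as a black box (``invoke the cited lemma of Frank'') and verifies its hypotheses in the concrete case $H_0 = \Delta$, $G_0 = M_{|V|^{1/2}}$, $G = M_{\overline{\sgn(V)}|V|^{1/2}}$ on $\mathbb{H}^n$. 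That is a proof of Proposition \ref{prop : m-sectorial} together with the subsequent compactness lemma --- and for those statements your argument (H\"older plus the Sobolev embedding $H^1(\mathbb{H}^n)\hookrightarrow L^{2n/(n-2)}$ to get the operator bound, truncation of $V$, Rellich--Kondrachov on the compact support, and passage to the norm limit of compact operators) matches the paper's essentially line for line. But as a proof of the stated lemma it is circular: the conclusion you are asked to establish is exactly the input you invoke.

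A genuine proof of the lemma would have to run along the following lines, none of which appear in your proposal. First, the compactness of $G(H_0+1)^{-1/2}$ and $G_0(H_0+1)^{-1/2}$ implies that the perturbing form $u \mapsto (Gu, G_0u)_{\mathbf{G}}$ is infinitesimally form-bounded relative to the closed non-negative form $\|H_0^{1/2}u\|^2_{\mathbf{H}}$; by Kato's stability theorem for sectorial forms, the sum is then sectorial and closed on $\mathrm{Dom}(H_0^{1/2})$, and the first representation theorem produces the m-sectorial operator $H$. Second, the spectral statement requires showing that the resolvent difference $(H-z)^{-1} - (H_0-z)^{-1}$ is compact --- via the factorization through $G(H_0+1)^{-1/2}$ and $G_0(H_0+1)^{-1/2}$ and a resolvent identity of Birman--Schwinger type --- followed by a Weyl-type argument and the analytic Fredholm theorem to conclude that outside $\sigma_{\mathrm{ess}}(H_0)$ the spectrum of $H$ consists of isolated eigenvalues of finite algebraic multiplicity. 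If your intent was merely to match the paper, the honest answer for this particular statement is the citation to Frank; if your intent was to supply a self-contained proof, the above steps are the missing content.
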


We want to apply this abstract lemma to the following quadratic form on $L^2(\mathbb{H}^n)$, $$\|\Delta^{1/2} u\|^2_{L^2(\mathbb{H}^n)} + (\sqrt{V}u, \sqrt{|V|}u)_{L^2(\mathbb{H}^n)}.$$ Then $\Delta + V$ would be an m-sectorial operator with a domain contained in $H^1(\mathbb{H}^n)$. It remains to prove \eqref{eqn : compact operators} for $\Delta$, $\sqrt{V}$ and $\sqrt{|V|}$. We have
\begin{lemma}Suppose $V \in L^p(\mathbb{H}^{n})$ with $n/2 \leq p < \infty$ is a complex potential. The operator $\sqrt{|V|} (\Delta + 1)^{-1/2}$ is compact on $L^2(\mathbb{H}^{n + 1})$.\end{lemma}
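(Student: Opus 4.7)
The plan is a standard three‐step approximation: (1) prove a quantitative operator norm bound for $\sqrt{|W|}(\Delta+1)^{-1/2}$ in terms of $\|W\|_{L^p}$; (2) approximate $V$ by bounded, compactly supported potentials in the $L^p$ norm; (3) verify compactness directly in the approximating case via a Rellich–Kondrachov argument, then pass to the operator norm limit.

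\textbf{Step 1 (operator norm bound).} For any $W\in L^p(\mathbb{H}^n)$ with $p\geq n/2$, I will show
\begin{equation*}
\|\sqrt{|W|}\,(\Delta+1)^{-1/2}\|_{L^2\to L^2}^2 \leq C\,\|W\|_{L^p(\mathbb{H}^n)}.
\end{equation*}
This follows by the square trick: for $f\in L^2$,
\begin{equation*}
\|\sqrt{|W|}(\Delta+1)^{-1/2}f\|_{L^2}^2 = \int_{\mathbb{H}^n}|W|\,|(\Delta+1)^{-1/2}f|^2
\leq \|W\|_{L^p}\,\|(\Delta+1)^{-1/2}f\|_{L^{2p'}}^2
\end{equation*}
by H\"older. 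The condition $p\geq n/2$ is equivalent to $2p'\leq 2n/(n-2)$, so the Sobolev embedding $H^1(\mathbb{H}^n)\hookrightarrow L^{2p'}(\mathbb{H}^n)$ applies (the endpoint is the usual Sobolev inequality on Cartan--Hadamard manifolds, and intermediate exponents follow by interpolation with $L^2$).

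\textbf{Step 2 (approximation in operator norm).} Let $V_k := V\,\chi_{\{|V|\leq k,\ \rho(z,z_0)\leq k\}}$ for a fixed basepoint $z_0$. Then each $V_k$ is bounded and compactly supported, and $V_k\to V$ in $L^p$ by dominated convergence. Using the elementary inequality $(\sqrt{a}-\sqrt{b})^2\leq |a-b|$ for $a,b\geq 0$, we have $\|\sqrt{|V|}-\sqrt{|V_k|}\|_{L^{2p}}^2\leq \||V|-|V_k|\|_{L^p}$. Applying Step 1 to the multiplier $\sqrt{|V|}-\sqrt{|V_k|}$ (regarded via the same H\"older scheme) yields
\begin{equation*}
\|(\sqrt{|V|}-\sqrt{|V_k|})(\Delta+1)^{-1/2}\|_{L^2\to L^2}^2
\leq C\,\||V|-|V_k|\|_{L^p}\longrightarrow 0.
\end{equation*}

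\textbf{Step 3 (compactness for the truncation).} I factor
$\sqrt{|V_k|}\,(\Delta+1)^{-1/2}=M_{\sqrt{|V_k|}}\circ (\Delta+1)^{-1/2}$,
where $(\Delta+1)^{-1/2}\colon L^2(\mathbb{H}^n)\to H^1(\mathbb{H}^n)$ is bounded. It remains to show that $M_{\sqrt{|V_k|}}\colon H^1(\mathbb{H}^n)\to L^2(\mathbb{H}^n)$ is compact. Since $\sqrt{|V_k|}$ is bounded and supported in a fixed geodesic ball $B_k$, any $H^1$‐bounded sequence, after multiplication by a smooth cutoff equal to $1$ on $B_k$, is bounded in $H^1$ of a relatively compact open set, hence has an $L^2$‐convergent subsequence by the local Rellich--Kondrachov theorem (valid on any Riemannian manifold since hyperbolic space is locally Euclidean up to a smooth change of coordinates). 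Multiplication by the bounded function $\sqrt{|V_k|}$ preserves this convergence in $L^2(\mathbb{H}^n)$.

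\textbf{Step 4 (conclusion).} Since compact operators form a norm‐closed subspace of the bounded operators on $L^2(\mathbb{H}^n)$, the norm limit $\sqrt{|V|}(\Delta+1)^{-1/2}=\lim_k \sqrt{|V_k|}(\Delta+1)^{-1/2}$ is compact.

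The only nontrivial ingredient is the Sobolev embedding $H^1(\mathbb{H}^n)\hookrightarrow L^{2n/(n-2)}(\mathbb{H}^n)$ used in Step 1; this is the main technical input but is classical on Cartan--Hadamard manifolds and can also be read off from the heat kernel bounds recalled in Section~2.
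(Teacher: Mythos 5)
Your proof is correct and follows essentially the same route as the paper: the quantitative bound via H\"older and the Sobolev embedding $H^1(\mathbb{H}^n)\hookrightarrow L^{2n/(n-2)}(\mathbb{H}^n)$, approximation by bounded compactly supported potentials, local Rellich--Kondrachov for the truncations, and norm-closedness of the compact operators. The only cosmetic difference is that the paper approximates $\sqrt{|V|}$ directly by $C_0^\infty$ functions in $L^{2p}$, whereas you truncate $V$ and use $(\sqrt{a}-\sqrt{b})^2\leq|a-b|$; both yield the same operator-norm convergence.
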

\begin{proof}
Sobolev's embedding $H^1(\mathbb{H}^n) \subset L^{2n/(n - 2)}(\mathbb{H}^n)$ and the mapping property $(\Delta + 1)^{-1/2}: L^2(\mathbb{H}^n) \rightarrow H^1(\mathbb{H}^n)$ yield that $$(\Delta + 1)^{-1/2}: L^2(\mathbb{H}^n) \longrightarrow L^{2n/(n - 2)}(\mathbb{H}^n).$$ It implies that $$(\Delta + 1)^{-1/2}: L^2(\mathbb{H}^n) \longrightarrow L^{q}(\mathbb{H}^n),\quad \mbox{for any $2 \leq q \leq 2n/(n - 2)$}.$$ From this, one can obtain that for any $W \in L^{2p}(\mathbb{H}^{n})$, \begin{equation}\label{eqn : estimates for compactness}
\|W(\Delta + 1)^{-1/2} f\|_{L^2(\mathbb{H}^n)} \leq C \|W\|_{L^{2p}(\mathbb{H}^n)}\|f\|_{L^{2}(\mathbb{H}^n)}.\end{equation}

We select a sequence $\{W_j \in C_0^\infty(\mathbb{H}^n)\}$ such that $W_j \rightarrow \sqrt{|V|}$ in $L^{2p}(\mathbb{H}^n)$. Then $W_j(\Delta + 1)^{-1/2}$ is an operator which maps $L^2(\mathbb{H}^n)$ to $H^1(\mathbb{H}^n)$. Since $W_j$ is compactly supported, the support of $W_j$ can be thought of as a compact manifold $M_j$ with $C^1$-boundary. The image of $W_j(\Delta + 1)^{-1/2}$ is contained in  $H^1(M_j)$. Since Rellich-Kondrachov theorem implies $H^1(M_j) \subset\subset L^2(M_j) \subset L^2(\mathbb{H}^n)$, we have that $W_j(\Delta + 1)^{-1/2}$ is a compact operator on $L^2(\mathbb{H}^n)$.
On the other hand, \eqref{eqn : estimates for compactness} yields that $$W_j(\Delta + 1)^{-1/2} \longrightarrow \sqrt{|V|}(\Delta + 1)^{-1/2} \quad \mbox{in $L^2(\mathbb{H}^n)$}.$$ Therefore, $\sqrt{|V|}(\Delta + 1)^{-1/2}$ is also a compact operator.

\end{proof}

\end{appendix}

\begin{flushleft}
\vspace{1cm}\textsc{Xi Chen\\
	Shanghai Center for Mathematical Sciences\\
	Fudan University\\
	Shanghai 200438, China\\
	and
	\\Department of Pure Mathematics and Mathematical Statistics\\University of Cambridge\\
	 Cambridge CB3 0WB, UK}

\emph{E-mail address}: \textsf{xi\_chen@fudan.edu.cn} and \textsf{xi.chen@dpmms.cam.ac.uk}

\end{flushleft}


\begin{thebibliography}{99}
	
\bibitem{Abramov-Aslanyan-Davies}A. A. Abramov, A. Aslanyan, and E. B. Davies, \emph{Bounds on complex eigenvalues and resonances}, J. Phys. A 34 (2001), no. 1, 57–72.

\bibitem{Anker-Pierfelice-Vallarino-CPDE-2011}J.-P. Anker, V. Pierfelice, and M. Vallarino, \emph{Schr\"{o}dinger equations on Damek-Ricci Spaces},   Comm. Partial Differential Equations {36}(2011), 976-997.

\bibitem{Chen-MathZ} X. Chen, \emph{Stein-Tomas restriction theorem via spectral measure on metric measure spaces}, Math. Z. {289}(2018), No. 3-4, 829-835.

\bibitem{Chen-Hassell2} X. Chen and A. Hassell, \emph{Resolvent and spectral measure on non-trapping asymptotically hyperbolic manifolds II: Spectral Measure, Restriction Theorem, Spectral Multiplier},  Ann. Inst. Fourier (Grenoble) {68}(2018), No. 3, 1011-1075.

\bibitem{Chen-Hassell-heat kernel}X. Chen and A. Hassell, \emph{The heat kernel on asymptotically hyperbolic manifolds},  	Comm. Partial Differential Equations, 45(2020), no.9, 1031-1071.

\bibitem{Cowling}M. Cowling,  \emph{The Kunze-Stein phenomenon}, Ann. Math. (2) {107}(1978), no. 2, 209-234.


\bibitem{Cowling-Meda-Setti}M. Cowling, \emph{Herz's "principe de majoration" and the Kunze-Stein phenomenon}, in Harmonic Analysis and Number Theory (Montreal, PQ, 1996), 73-88, CMS Conf. Proc. 21, A.M.S., Providence, RI, 1997.

\bibitem{Davies-Mandouvalos} E. B. Davies and N. Mandouvalos, \emph{Heat kernel bounds on hyperbolic space and Kleinian groups}, Proc. London Math. Soc. (3) {57} (1988), no. 1, 182-208.

\bibitem{Frank-BLMS}R. L. Frank, \emph{Eigenvalue bounds for Schr\"odinger operators with complex potentials}, Bull. Lond. Math. Soc. {43}(2011) no.4, 745-750.

\bibitem{Frank-Sabin} R. L. Frank and J. Sabin, \emph{Restriction theorems for orthonormal functions, Strichartz inequalities and uniform Sobolev estimates}, Amer. J. Math. {139}(2017), no. 6,  1649-1691.

\bibitem{Frank-Simon II} R. L. Frank and B. Simon, \emph{Eigenvalue bounds for Schr\"odinger operators with complex potentials II}, J. Spectr. Theory \textbf{7}(2017), 633-658.
\bibitem{Frank-Simon III} R. L. Frank, \emph{Eigenvalue bounds for Schr\"odinger operators with complex potentials III}, Tram. Amer. Math. Soc. {370}(2018), no. 1, 219-240.

\bibitem{Guillarmou-Hassell}C. Guillarmou and A. Hassell, \emph{Uniform Sobolev estimates for non-trapping metrics}, J. Inst. Math. Jussieu, {13}(2014), 599-632.

\bibitem{Guillarmou-Hassell-Krupchyk} C. Guillarmou, A. Hassell, and K. Krupchyk, \emph{Eigenvalue bounds for non-self-adjoint Schr\"{o}dinger operators with non-trapping metrics},  Anal. PDE, 13(2020), 1633-1670.




\bibitem{Guillarmou-Hassell-Sikora}C. Guillarmou, A. Hassell, and A. Sikora, \emph{Restriction and spectral multiplier theorems on asymptotically conic manifolds}, Anal. PDE {6}(2013), No.{4}, 893-950.


\bibitem{Hansmann} M. Hansmann, \emph{Lp-spectrum and Lieb-Thirring inequalities for Schr\"odinger operators on the hyperbolic plane},  Ann. Henri Poincar\'e 20(2019), no. 7, 2447-2479.

\bibitem{Huang-Sogge}S. Huang and C. D. Sogge, \emph{Concerning $L^p$ resolvent estimates for simply connected manifolds of constant curvature}, J. Funct. Anal. {267}(2014), 4635-4666.


\bibitem{Ionescu}A. D. Ionescu, \emph{An endpoint estimate for the Kunze-Stein phenomenon and related maximal operators}, Ann. of Math. (2) {152}(2000), no. 1, 259-275.


\bibitem{Kato}T. Kato, \emph{Perturbation Theory for Linear Operators}, Springer, 1995.

\bibitem{Kenig-Ruiz-Sogge} C. E. Kenig, A. Ruiz, and C. D. Sogge, \emph{Uniform Sobolev inequalities and unique continuation for second order constant coefficient differential operators}, Duke Math. J. {55}(1987), No. 2, 329-347.

\bibitem{Mazzeo-Melrose}R. Mazzeo and R. B. Melrose, \emph{Meromorphic extension of the resolvent on complete spaces with asymptotically constant negative curvature}, J. Func. Anal. {75}(1987), 260-310.

  \bibitem{Melrose-Sa Barreto-Vasy}R. B. Melrose, A. S\'{a} Barreto and A. Vasy, \emph{Analytic continuation and semiclassical resolvent estimates on asymptotically hyperbolic spaces,}  Comm. Partial Differential Equations {39}(2014), 452-511.

\bibitem{interpolation}E. M. Stein, \emph{Interpolation of linear operators}, Trans. Amer. Math. Soc. {83}(1956), 482-492.

\bibitem{Beijing lecture}E. M. Stein, \emph{Oscillatory integrals in Fourier analysis}, Beijing Lectures in Harmonic Analysis, Ann. of Math. Stud. {112}, 307-355, Princeton Univ. Press, Princeton, NJ, 1986.


\bibitem{Tomas}P. Tomas, \emph{A restriction theorem for the Fourier transform}, Bull. Amer. Math. Soc. {81}(1975), 477-478.








\end{thebibliography}
\end{document}